\renewcommand{\(}{\begin{equation}}
\renewcommand{\)}{\end{equation}}
\newcommand{\bea}{\begin{eqnarray}}
\newcommand{\eea}{\end{eqnarray}}
\newcommand{\C}{{\mathbb C}}
\newcommand{\Z}{{\mathbb Z}}
\newtheorem{theorem}{Theorem}[subsection]
\newtheorem{lemma}[theorem]{Lemma}
\newtheorem{proposition}[theorem]{Proposition}
\newtheorem{definition}[theorem]{Definition}
\newtheorem{remark}[theorem]{Remark}
\begin{document}

\begin{flushright}
   {\sf ZMP-HH/14-2}\\
   {\sf Hamburger$\;$Beitr\"age$\;$zur$\;$Mathematik$\;$Nr.$\;$499}\\[2mm]
   January 2014
\end{flushright}
\vskip 3.5em
                                   
\begin{center}
\begin{tabular}c \Large\bf 
   A Serre-Swan theorem for gerbe modules \\[2mm] \Large\bf
   on \'etale Lie groupoids
\end{tabular}\vskip 2.5em
                                     
  ~Christoph Schweigert\,$^{\,a}$,~
  ~Christopher Tropp\,$^{\,b}$,~
  ~Alessandro Valentino\,$^{\,a}$

\vskip 9mm

  \it$^a$
  Fachbereich Mathematik, \ Universit\"at Hamburg\\
  Bereich Algebra und Zahlentheorie\\
  Bundesstra\ss e 55, \ D\,--\,20\,146\, Hamburg\\[4pt]
  \it$^b$
  Mathematisches Institut, \ WWU M\"unster\\
  Einsteinstr. 62, \ D\,--\, 48149 M\"unster\\[4pt]

\end{center}
                     
\vskip 5.3em

\noindent{\sc Abstract}\\[3pt]
Given a bundle gerbe on a compact smooth manifold or, more generally, 
on a compact \'etale Lie groupoid $M$, we show that the corresponding 
category of gerbe modules, if it is non-trivial, is equivalent
to the category of finitely generated projective modules over an
Azumaya algebra on $M$. This result can be seen as an equivariant
Serre-Swan theorem for twisted vector bundles.

\section{Introduction}
The celebrated Serre-Swan theorem relates the category of vector
bundles over a compact smooth manifold $M$ to the category of finite rank
projective modules over the algebra of smooth functions $C^\infty(M,\C)$
of $M$ (see \cite{gbv, mory} for the Serre-Swan theorem in the smooth category).
It relates geometric and algebraic notions and is, in particular,
the starting point for the definition of vector bundles in 
non-commutative geometry.

A bundle gerbe on $M$ can be seen as a geometric realization of
its Dixmier-Douady class, which is a class in $H^3(M;\Z)$. 
To such a geometric realization, a twisted
$K$-theory group can be associated. Gerbe modules have been introduced to
obtain a geometric description of twisted K-theory \cite{BCMMS}.
A bundle gerbe (with connection) describes
a string background with non-trivial $B$-field; gerbe modules
(with connection) arise also as Chan-Paton bundles on the worldvolume $M$ of D-branes in such backgrounds
\cite{gaw}. It is an old idea that, in the presence of a non-trivial
$B$-field, the worldvolume of a D-brane should become non-commutative
in some appropriate sense.
This has lead to the idea \cite{kapustin} that an Azumaya algebra over 
$M$ then plays the role of the algebra of functions on $M$ and that
a version of the Serre-Swan theorem should relate gerbe modules and finitely
generated projective modules over this Azumaya algebra.
This idea has been made mathematically rigourous in \cite{karoubi},
using the language of twisted vector bundles which requires
using a suitable open cover of $M$ and working with
locally defined quantities.

In this note we first derive this result, but with rather different
techniques, using descent theory. Our key insight can be described
as follows (cf.\ Lemma 3.1.1): a gerbe module on $M$ is a vector bundle $E$ on 
the space $Y$ of a fibration $Y\to M$, together with additional
data. We show that, given two gerbe modules $E,E'$ over the same
gerbe, the homomorphism bundle $\mathrm{Hom}(E,E')$ is not only a vector
bundle over $Y$, but comes with enough data to turn it into
an object in the descent category of vector bundles
$Desc(Y\to M)$. We then identify this category with the category
of vector bundles on $M$ and use the global section functor
in the spirit of the Serre-Swan theorem.
This yields an Azumaya algebra on $M$;
our construction is natural and thus yields for any gerbe module a
module over this Azumaya algebra.

The stack of vector bundles naturally extends from smooth manifolds
to Lie groupoids. Since our techniques are based on general descent
techniques, they can be transferred to 
\'etale groupoids, so that we finally obtain theorem \ref{4.3.5},
a Serre-Swan theorem for gerbe modules on \'etale Lie groupoids.
Action groupoids provide examples of \'etale Lie groupoids.
Our results, apart from their intrinsic mathematical interest, 
therefore have applications to D-branes in string backgrounds with
group actions and to D-branes in orbifolds with $B$-fields.

Our note is organized as follows: section 2 contains some preliminaries;
in section 3, we use descent techniques to prove the Serre-Swan 
theorem \ref{main} for gerbe modules on smooth compact manifolds. 
In section 4, we generalize our results to \'etale Lie groupoids.

\section{Preliminaries}
\subsection{Gerbes and gerbe modules}\label{prelim}

In this section we will recall some background material on gerbes and gerbe modules, referring to \cite{murray, waldorf} for further details. We stress that all gerbes and gerbe modules are \emph{not} equipped with a connection.\\
In the following, given a fibration $Y\to{M}$, $Y_{M}^{[k]}$ will denote the $k$-th fibered product of $Y$ over $M$, and $\pi_{i}:Y_{M}^{[k]}\to Y_{M}^{[k-1]}$ the map given by omitting the $i$-th entry.
Similarly, $\pi_{i_{1}i_{2}\ldots{i_{m}}}:Y^{[k]}\to Y^{[k-m]}$ denotes 
the composition $\pi_{i_{1}}\circ\pi_{i_{2}}\circ\ldots\circ
\pi_{i_{m}}$. 

\begin{definition}
A bundle gerbe $\mathcal{G}$ over a manifold $M$ consists of a triple $(Y,L,\mu)$, where $\pi:Y\to{M}$ is a surjective submersion, $L$ a hermitian line bundle over $Y_{M}^{[2]}$, and 
\begin{displaymath}
\mu:\pi_{3}^{*}L\otimes\pi_{1}^{*}L\to\pi^{*}_{2}L
\end{displaymath}
is a bundle isomorphism over $Y_{M}^{[3]}$ satisfying the natural associativity condition over $Y_{M}^{[4]}$. 
\end{definition} 

Given a bundle gerbe $\mathcal{G}$, we can introduce the notion of $\mathcal{G}$-modules and their morphisms. 
\begin{definition} \label{gmodule}
Let $\mathcal{G}=(Y,L,\mu)$ be a bundle gerbe over $M$. A gerbe module $\mathcal{M}$ over $\mathcal{G}$ (or $\mathcal{G}$-module) is a pair $(E,\rho)$, where $E\to{Y}$ is a finite rank hermitian vector bundle, and 
\begin{displaymath}
\rho:L\otimes\pi_{1}^{*}E \to \pi_{2}^{*}E
\end{displaymath}
is an isomorphism of hermitian vector bundles on $Y^{[2]}$, 
satisfying a compatibility condition over $Y^{[3]}$, namely the bundle maps obtained from pullbacks of $\rho$ and $\mu$
\begin{displaymath}
\pi^{*}_{3}L\otimes\pi_{1}^{*}(L\otimes\pi_{1}^{*}E) \to \pi^{*}_{3}L\otimes\pi_{21}^{*}E \to \pi_{23}^{*}E
\end{displaymath} 
and
\begin{displaymath}
\pi^{*}_{3}L\otimes\pi_{1}^{*}(L\otimes\pi_{1}^{*}E) \to \pi^{*}_{2}L\otimes\pi_{12}^{*}E \to \pi_{23}^{*}E
\end{displaymath} 
coincide.
\end{definition}
Any bundle gerbe admits a trivial gerbe module, given by the pair $(\underline{0},id)$.
\begin{definition}
Let $\mathcal{G}=(Y,L,\mu)$ be a bundle gerbe, and $\mathcal{M}=(E,\rho)$, $\mathcal{N}=(E',\rho')$ be $\mathcal{G}$-modules. A $\mathcal{G}$-module morphism $\mathcal{M}\to\mathcal{N}$ is given by a vector bundle morphism $f:E\to{E'}$ such that the following diagram of morphisms of
hermitian vector bundles on $Y^{[2]}$
%%%
\begin{equation}\label{gmorf}
\xymatrix{L\otimes\pi_{1}^{*}E\ar[r]^{\rho}\ar[d]_{id\otimes\pi_{1}^{*}f} & \pi_{2}^{*}E\ar[d]^{\pi_{2}^{*}f}\\
L\otimes\pi_{1}^{*}E'\ar[r]^{\rho'} & \pi_{2}^{*}E'
}
%%%
\end{equation}
commutes. We will denote with ${\rm Hom}_{\mathcal{G}}(\mathcal{M},\mathcal{N})$ the space of $\mathcal{G}$-module morphisms between $\mathcal{M}$ and $\mathcal{N}$.
\end{definition}
$\mathcal{G}$-modules and their morphisms form a category $\mathcal{G}{\rm -mod}$. Moreover, the direct sum of vector bundles and bundle morphisms induces a direct sum on $\mathcal{G}{\rm -mod}$, with the trivial gerbe module $(\underline{0},{\rm id})$ as the neutral element.\\
It is not difficult to prove the following
\begin{lemma}Let $\mathcal{G}$ be a bundle gerbe over $M$. Then $\mathcal{G}{\rm -mod}$ is a $\mathbb{C}$-linear category.\end{lemma}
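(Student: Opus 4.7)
The plan is to verify the two defining properties of a $\mathbb{C}$-linear category: each morphism space $\Hom_{\mathcal{G}}(\mathcal{M},\mathcal{N})$ must carry a natural $\mathbb{C}$-vector space structure, and composition must be $\mathbb{C}$-bilinear.

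First I would observe that for any two hermitian vector bundles $E, E'$ over $Y$, the set $\Hom(E,E')$ of vector bundle morphisms is naturally a $\mathbb{C}$-vector space under pointwise addition and scalar multiplication. The only thing to check is that the subset carved out by commutativity of diagram (\ref{gmorf}) is a $\mathbb{C}$-linear subspace. Since $\rho$ and $\rho'$ are fixed and the assignments $f \mapsto \pi_{2}^{*}f$ and $f \mapsto \mathrm{id}_{L}\otimes \pi_{1}^{*}f$ are $\mathbb{C}$-linear in $f$, the defining equation $\rho' \circ (\mathrm{id}_{L}\otimes \pi_{1}^{*}f) = \pi_{2}^{*}f \circ \rho$ is $\mathbb{C}$-linear in $f$; hence its solution set is a linear subspace of $\Hom(E,E')$. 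The zero element is given by the zero bundle morphism, which trivially fits into (\ref{gmorf}).

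Next I would check that composition is $\mathbb{C}$-bilinear. Given $\mathcal{G}$-module morphisms $f : \mathcal{M} \to \mathcal{N}$ and $g : \mathcal{N} \to \mathcal{P}$, pasting their two commutative squares of type (\ref{gmorf}) shows that $g \circ f$ is again a $\mathcal{G}$-module morphism, so composition is well defined. Bilinearity of $(f,g)\mapsto g\circ f$ in each argument follows at once from the corresponding bilinearity of composition of ordinary vector bundle morphisms on $Y$, since passing to the $\mathcal{G}$-equivariant subspaces preserves this structure.

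I do not anticipate any substantial obstacle here: the lemma is a bookkeeping statement. The essential input is simply that (\ref{gmorf}) is linear in the single variable $f$, which reduces everything to the elementary fact that hermitian vector bundles on $Y$ already form a $\mathbb{C}$-linear category. The main role of the lemma is to fix notation and structure on $\mathcal{G}\text{-mod}$ for the descent-theoretic arguments to follow.
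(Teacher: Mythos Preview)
Your proposal is correct. The paper does not supply a proof of this lemma at all; it merely prefaces the statement with ``It is not difficult to prove the following'' and moves on. Your argument is exactly the routine verification the authors are gesturing at: the condition in diagram~(\ref{gmorf}) is $\mathbb{C}$-linear in $f$, so $\Hom_{\mathcal{G}}(\mathcal{M},\mathcal{N})$ is a linear subspace of the ambient $\Hom(E,E')$, and bilinearity of composition is inherited from the underlying category of vector bundles on $Y$. There is nothing to compare approaches against, and your write-up is the standard way to spell this out.
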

\subsection{Azumaya algebras over manifolds}
Recall that an \emph{Azumaya algebra} over a commutative (local) ring $R$ is an $R$-algebra $A$ such that:
\begin{enumerate}
\item $A$ is free and finite rank as an $R$-module
\item $A\otimes_{R}A^{opp}\simeq{\rm End}_{R}(A)$ via the assignment $a\otimes{b}\to a\cdot(\quad)\cdot{b}$.
\end{enumerate}  

\begin{lemma}
For a complex vector space $V$, the endomorphism algebra ${\rm End}(V)$ is an Azumaya algebra.
\end{lemma}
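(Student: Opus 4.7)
The claim is that for a (necessarily finite-dimensional) complex vector space $V$ of dimension $n$, the endomorphism algebra $\mathrm{End}(V)$ satisfies the two axioms of an Azumaya algebra over $R=\C$. My plan is to check the two conditions in the definition directly, keeping in mind that $\C$ is a field so ``free and finite rank'' just means finite-dimensional.

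For (1), I would simply note that after choosing a basis of $V$, $\mathrm{End}(V)\cong M_n(\C)$, which is a free $\C$-module of rank $n^2$, with the elementary matrices $E_{ij}$ as a basis. So condition (1) is immediate.

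For (2), the plan is to study the canonical $\C$-algebra map
\[
\Phi:\mathrm{End}(V)\otimes_{\C}\mathrm{End}(V)^{\mathrm{opp}}\longrightarrow \mathrm{End}_{\C}(\mathrm{End}(V)),\qquad \Phi(a\otimes b)(x)=a\,x\,b.
\]
First I would verify that $\Phi$ is a well-defined $\C$-algebra homomorphism; the opposite multiplication on the second factor is needed exactly to make the right multiplication by $b$ compose correctly. Next, a dimension count: both source and target are $\C$-vector spaces of dimension $n^{4}$. Hence it suffices to show that $\Phi$ is surjective (or equivalently, injective). Surjectivity I would establish by hitting the basis of elementary endomorphisms $\varepsilon_{(ij),(kl)}\in\mathrm{End}_{\C}(\mathrm{End}(V))$, defined by $\varepsilon_{(ij),(kl)}(E_{pq})=\delta_{ip}\delta_{jq}E_{kl}$. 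The identity $E_{ki}\,E_{pq}\,E_{jl}=\delta_{ip}\delta_{qj}E_{kl}$ gives $\Phi(E_{ki}\otimes E_{jl})=\varepsilon_{(ij),(kl)}$, so the image of $\Phi$ contains a basis of the target and $\Phi$ is surjective, hence an isomorphism.

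There is essentially no real obstacle here; the only subtle point is the bookkeeping of indices in verifying that $\Phi$ maps the natural generators of the source onto a basis of $\mathrm{End}_{\C}(\mathrm{End}(V))$, and that the presence of the opposite algebra structure makes $\Phi$ an algebra homomorphism rather than merely linear. Once this is set up, the argument is purely a matter of choosing a basis and a dimension count, and the statement follows.
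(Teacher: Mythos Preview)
Your proof is correct. The paper actually states this lemma without proof, treating it as a standard fact about matrix algebras over a field, so there is nothing to compare against beyond noting that your direct verification of the two axioms is the expected argument. Your reading that $V$ must be finite-dimensional is the right one in context (otherwise condition (1) already fails), and your index computation $E_{ki}E_{pq}E_{jl}=\delta_{ip}\delta_{qj}E_{kl}$ together with the dimension count is exactly the standard way to see that $\Phi$ is an isomorphism.
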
    

A generalization of the notion of Azumaya algebra in the context of algebraic geometry is due to Grothendieck \cite{grothendieck}. In the following we give the relevant definition in the smooth setting.
\begin{definition}
An Azumaya algebra over a manifold $M$ is a complex algebra $\mathcal{A}$ which can be obtained as the algebra of sections of an algebra bundle on $M$ whose  fibers are Azumaya algebras.  
\end{definition} 
In the following, $\mathcal{A}$ will denote either the algebra of section or the actual algebra bundle, and we will pass between the two equivalent descriptions freely. Moreover, we will often omit to indicate the manifold $M$.\\
An example of an Azumaya algebra over a manifold $M$ is given by the sections of the endomorphism bundle ${\rm End}(E)$, where $E$ is a complex vector bundle. \\
Azumaya algebras over $M$ are equipped with a tensor product, given by the tensor product of algebra bundles. We will say that two Azumaya algebras $\mathcal{A}_{1}$ and $\mathcal{A}_{2}$ are equivalent if there exist vector bundles $E_{1}$ and $E_{2}$ such that
\begin{displaymath}
\mathcal{A}_{1}\otimes{\rm End}(E_{1}) \simeq \mathcal{A}_{2}\otimes{\rm End}(E_{2})
\end{displaymath}
%%%
The set of equivalence classes of Azumaya algebras over $M$ 
forms a group, the \emph{Brauer group} of $M$ with inverses 
represented by opposite algebras.
An important result is the following \cite{grothendieck}
%%%
\begin{theorem}
The Brauer group of a manifold $M$ is isomorphic to the torsion subgroup of $H^{3}(M;\mathbb{Z})$.
\end{theorem}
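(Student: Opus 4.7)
The plan is to construct explicitly a group homomorphism $\delta: \mathrm{Br}(M) \to H^3(M;\Z)_{\mathrm{tors}}$ and verify it is an isomorphism. First, given an Azumaya algebra bundle $\mathcal{A}$ of rank $n^2$, one chooses a good open cover $\{U_\alpha\}$ of $M$ on which $\mathcal{A}|_{U_\alpha} \simeq \mathrm{End}(V_\alpha)$ for some trivial rank-$n$ vector bundle $V_\alpha \to U_\alpha$. By the Skolem--Noether theorem applied fiberwise, the transitions on double overlaps are implemented by sections $g_{\alpha\beta}$ of the projective unitary group $PU(n)$. From the short exact sequence of sheaves
\[
1 \to \underline{U(1)} \to \underline{U(n)} \to \underline{PU(n)} \to 1,
\]
the \v{C}ech cocycle $(g_{\alpha\beta})$ has an obstruction to lifting which lies in $H^2(M;\underline{U(1)}) \simeq H^3(M;\Z)$. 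I would take $\delta(\mathcal{A})$ to be this class.

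Next I would verify the map is well-defined on Brauer equivalence classes. For this, one checks that the endomorphism bundle $\mathrm{End}(E)$ of a genuine vector bundle $E$ lifts to $U(n)$ on the nose, so $\delta(\mathrm{End}(E))=0$; and that $\delta$ is additive with respect to tensor product, which follows because the $PU$-cocycle of $\mathcal{A}_1 \otimes \mathcal{A}_2$ is the tensor of the two individual cocycles under $PU(n)\times PU(m)\to PU(nm)$, and the obstruction behaves additively. The torsion statement follows at once: the class $\delta(\mathcal{A})$ has order dividing $n$, since the composite $PU(n) \to \mathrm{B}U(1)$ factors through an $n$-torsion object (concretely, the determinant gives $\det: U(n)\to U(1)$ which vanishes on the center after raising to the $n$-th power, forcing $n\cdot \delta(\mathcal{A})=0$).

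For injectivity I would argue that $\delta(\mathcal{A})=0$ means the $PU(n)$-cocycle lifts to a $U(n)$-cocycle, providing a genuine vector bundle $E$ with $\mathcal{A} \simeq \mathrm{End}(E)$; hence $\mathcal{A}$ is trivial in $\mathrm{Br}(M)$. For surjectivity onto the torsion subgroup, I would start with an $n$-torsion class $\xi \in H^3(M;\Z)$, realize it by a $PU(n)$-principal bundle $P$ (using that $\mathrm{B}PU(n)$ detects $n$-torsion in $H^3$ through the connecting map of the sequence above, a standard fact from the classifying-space interpretation), and form the associated bundle $P \times_{PU(n)} M_n(\C)$ whose fibers are the Azumaya algebra $M_n(\C)$ with $PU(n)$ acting by conjugation. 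This bundle is naturally an Azumaya algebra whose class maps to $\xi$.

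The main obstacle is surjectivity, specifically producing, for an arbitrary torsion class of order $n$, an actual $PU(n)$-bundle (as opposed to a $PU$-bundle for the stable $PU$) lifting it. On a compact manifold this can be handled either by noting that $\mathrm{B}PU(n)$ receives the relevant $n$-torsion for $n$ sufficiently divisible and then using that compactness bounds dimension, or by invoking Serre's theorem that torsion classes in $H^3$ of a finite CW-complex come from finite-dimensional projective bundles. Once this topological input is in place, the remaining identifications (smoothness of the Azumaya structure, naturality under pullback) are routine.
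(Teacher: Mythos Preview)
The paper does not prove this statement at all: it is quoted as background with a citation to Grothendieck, so there is no ``paper's proof'' to compare against. Your outline is the standard one and is essentially correct, but two points deserve tightening.

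First, your torsion argument is garbled. The clean way to see that $\delta(\mathcal{A})$ is $n$-torsion is to use the exact sequence $1\to\mu_n\to SU(n)\to PU(n)\to 1$: the obstruction to lifting a $PU(n)$-cocycle to $SU(n)$ lands in $H^2(M;\mu_n)$, and its image under $\mu_n\hookrightarrow U(1)$ agrees with the obstruction coming from the $U(n)$-sequence. Since $\mu_n$ is $n$-torsion, so is the class. Your determinant remark is pointing at this but not quite saying it.

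Second, and more seriously, your surjectivity paragraph is internally inconsistent. You first assert that for an $n$-torsion class one can ``realize it by a $PU(n)$-principal bundle \ldots\ a standard fact,'' and then immediately identify this very step as ``the main obstacle.'' It is indeed the hard part, and it is \emph{not} true that an order-$n$ class necessarily lifts to a $PU(n)$-bundle; one only gets a $PU(m)$-bundle for some $m$ (in general a multiple of $n$). The genuine input here is the theorem (Serre/Grothendieck, for finite CW complexes or compact manifolds) that every torsion class in $H^3(M;\Z)$ arises from some finite-dimensional projective bundle; this is exactly what the paper's citation to Grothendieck is covering. Once you invoke that, the rest of your argument goes through.
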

%%%
\section{Gerbe modules and descent data}
\subsection{Descent category}\label{descent}
Let $Y\xrightarrow{\pi}{M}$ be a surjective submersion, and let $S:Man^{op}\to{Cat}$ a pre-sheaf over the site of manifolds with values in categories. To these data we can assign the \emph{descent category} $Desc^{S}(Y\xrightarrow{\pi}{M})$ defined as follows \cite{brylinski, niksch,husemoller}:
\begin{enumerate}
\item an object is a pair $(E,\varphi)$, where $E$ is an object in $S(Y)$ and $\varphi$ is an isomorphism 
\begin{displaymath}
\varphi:\pi_{1}^{*}{E}\simeq\pi_{2}^{*}E
\end{displaymath}
over $Y_{M}^{[2]}$ satisfying the associativity condition $\pi_{2}^{*}(\varphi)=\pi_{3}^{*}(\varphi)\circ\pi_{1}^{*}(\varphi)$ over $Y_{M}^{[3]}$.
\item a morphism $(E,\varphi)\to(E',\varphi')$ is given by an element $f\in{\rm Hom}_{S(Y)}(E,E')$ for which the following diagram commutes
%%%
\begin{displaymath}
\xymatrix{\pi_{1}^{*}{E}\ar[r]^{\varphi}\ar[d]_{\pi^{*}_{1}(f)} & \pi_{2}^{*}E\ar[d]^{\pi^{*}_{2}(f)}\\
\pi_{1}^{*}{E'}\ar[r]^{\varphi'} & \pi_{2}^{*}E'
}
\end{displaymath}
%%%
\end{enumerate}
We have a canonical functor $\pi^{*}:S(M)\to{Desc^{S}(Y\xrightarrow{\pi}{M})}$, which assigns to $E\in{S(M)}$ the pair $(\pi^{*}E,{\rm id})$. Moreover, when $S$ is a stack for surjective submersions, the functor $\pi^{*}$ is an equivalence of categories.\\
We will be interested in the case where $S={\rm Vect}$, the stack of vector bundles. To simplify the notation, we will denote with $Desc(Y\xrightarrow{\pi}{M})$ the descent category associated to $Y\xrightarrow{\pi}{M}$ and ${\rm Vect}$. In this case, $\pi^{*}$ has a canonical inverse 
$D:Desc(Y\xrightarrow{\pi}{M})\to{\rm Vect}(M)$ (See \cite{brylinski}, Chapter 5).\\

Let $\mathcal{G}=(Y,L,\mu)$ be a bundle gerbe over $M$, and let $\mathcal{M}=(E,\rho)$ and $\mathcal{N}=(E',\rho')$ be $\mathcal{G}$-modules. Consider the homomorphism bundle ${\rm Hom}(E,E')\simeq E^{*}\otimes E'$ on $Y$. 
\footnote{Whenever we refer to homomorphism bundles, the symbol 
$\mathrm{Hom}$ is used without subscript; homomorphism sets in categories
in contrast always have a subscript.}
Consider the isomorphism 
\begin{equation}
 \varphi_{EE'}:\pi_{2}^{*}{\rm Hom}(E,E')\to\pi_{1}^{*}{\rm Hom}(E,E')
\end{equation}
%%%
of hermitian line bundles on $Y^{[2]}$ induced by 
\begin{equation}\label{desciso}
\pi_{2}^{*}E^{*}\otimes\pi_{2}^{*}E'\xrightarrow{(\rho)^{*}\otimes(\rho')^{-1}}\pi_{1}^{*}E^{*}\otimes(L^{*} \otimes L)\otimes\pi_{1}^{*}E'\xrightarrow{{\rm id}\otimes{c}\otimes{\rm id}}\pi_{1}^{*}E^{*}\otimes\pi_{1}^{*}E'
\end{equation}
%%%
where $c:L^{*}\otimes{L}\to\underline{\mathbb{C}}$ denotes the canonical isomorphism.\\
We have then the following central lemma
\begin{lemma}
\label{lemmaobj}
$({\rm Hom}(E,E'),\varphi_{EE'}^{-1})$ is an object in $Desc(Y\xrightarrow{\pi}{M})$.
\end{lemma}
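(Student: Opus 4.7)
The plan is to verify the two conditions in the definition of the descent category: first, that $\varphi_{EE'}^{-1}$ is a vector bundle isomorphism $\pi_{1}^{*}\mathrm{Hom}(E,E')\to\pi_{2}^{*}\mathrm{Hom}(E,E')$ on $Y^{[2]}$, and second, that it satisfies the cocycle condition on $Y^{[3]}$. That $\mathrm{Hom}(E,E')\simeq E^{*}\otimes E'$ is a finite-rank vector bundle on $Y$ is immediate since $E$ and $E'$ are. The invertibility of $\varphi_{EE'}$ is also immediate from the factorisation~\eqref{desciso}: every arrow there is invertible, since $\rho$ and $\rho'$ are $\mathcal{G}$-module structures (hence isomorphisms by Definition~\ref{gmodule}) and $c:L^{*}\otimes L\to\underline{\mathbb{C}}$ is the canonical duality pairing of a line bundle with its dual.

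The substantive part is the cocycle identity
\[
\pi_{2}^{*}(\varphi_{EE'}^{-1})=\pi_{3}^{*}(\varphi_{EE'}^{-1})\circ\pi_{1}^{*}(\varphi_{EE'}^{-1})
\]
on $Y^{[3]}$, equivalently $\pi_{1}^{*}\varphi_{EE'}\circ\pi_{3}^{*}\varphi_{EE'}=\pi_{2}^{*}\varphi_{EE'}$ as morphisms $\pi_{3}^{*}\mathrm{Hom}(E,E')\to\pi_{1}^{*}\mathrm{Hom}(E,E')$. I would prove this by pulling the two-step factorisation~\eqref{desciso} back along each of $\pi_{1},\pi_{2},\pi_{3}$ and chasing the resulting diagram on $Y^{[3]}$. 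The three inputs needed are: (a) the $\mathcal{G}$-module compatibility of Definition~\ref{gmodule} applied to $\rho$, which expresses how the three pull-backs of $\rho$ to $Y^{[3]}$ recombine via $\mu$; (b) the analogous axiom for $\rho'$; and (c) the canonicity of the pairing $c$, which in particular implies that the two compositions $L^{*}\otimes L\otimes L^{*}\otimes L\to\underline{\mathbb{C}}$ given by $c\otimes c$ (in either order) coincide, and that contracting $L^{*}\otimes L$ is compatible with $\mu$ and $\mu^{*}$ in the sense that $c\circ(\mu^{*}\otimes\mu^{-1})=c$.

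Combining these, the $\rho^{*}$ factors in $\pi_{1}^{*}\varphi\circ\pi_{3}^{*}\varphi$ are rearranged by the axiom for $E$ into the $\rho^{*}$ factor of $\pi_{2}^{*}\varphi$ together with an insertion of $\mu^{*}$ on the $L^{*}$ side; similarly, the $(\rho')^{-1}$ factors are reorganised by the axiom for $E'$ producing an insertion of $\mu^{-1}$ on the $L$ side. These two insertions are then absorbed by the repeated application of $c$ using~(c), and the remaining $E^{*}\otimes E'$ composition is exactly the factorisation of $\pi_{2}^{*}\varphi_{EE'}$. I expect the main obstacle to be purely notational: keeping track of which pull-back lives over which pair of factors of $Y^{[3]}$ and of the order of the various tensorands, rather than any conceptual difficulty, since no input is required beyond the gerbe associativity, the $\mathcal{G}$-module axioms for $\rho$ and $\rho'$, and the canonicity of the pairing $c$.
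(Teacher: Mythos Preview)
Your proposal is correct and follows essentially the same approach as the paper: reduce to showing $\pi_{1}^{*}\varphi\circ\pi_{3}^{*}\varphi=\pi_{2}^{*}\varphi$, then use the $\mathcal{G}$-module axioms for $\rho$ and $\rho'$ together with the compatibility of the canonical pairing $c$ with $\mu$ (your point~(c) is precisely the paper's diagram relating $(\mu^{*})^{-1}\otimes\mu$ to the pairings $c_{i}$). The paper makes the diagram chase explicit where you leave it as a sketch, but the ingredients and the logical structure are identical.
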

%%%
%%%%
\begin{proof}
To simplify the notation, we set $\varphi=\varphi_{EE'}$. We have to prove that $\varphi^{-1}$ satisfies the associativity condition $\pi_{2}^{*}\varphi^{-1}=\pi_{3}^{*}\varphi^{-1}\circ\pi_{1}^{*}\varphi^{-1}$, or, equivalently, that $\varphi$ satisfies $\pi_{2}^{*}\varphi=\pi_{1}^{*}\varphi\circ\pi_{3}^{*}\varphi$. We will simplify the notation by using $E_{ij}$, $\varphi_{ij}$, etc. for $\pi^{*}_{ij}E$, $\pi_{ij}^{*}\varphi$, etc. .\\
First, recall that the isomorphism $\mu$ sits in the following commutative diagram
\begin{equation}\label{diagmu}
\xymatrixcolsep{5pc}\xymatrix{L^{*}_{1}\otimes L^{*}_{3}\otimes L_{3} \otimes L_{1} \ar[r]^{(\mu^{*})^{-1}\otimes\mu}\ar[d]_{c_{3}} & L^{*}_{2}\otimes L_{2}\ar[d]^{c_{2}}\\
L^{*}_{1} \otimes L_{1} \ar[r]_{c_{1}} & \underline{\mathbb{C}}
}
\end{equation}
where we have used the canonical pairing $c_{i}:L^{*}_{i}\otimes{L_{i}}\to\underline{\mathbb{C}}$. By using that the morphism $\rho$ and $\rho'$ are compatible with $\mu$, we see that the morphism $\varphi_{2}$ can be obtained as
%%%
\begin{equation}\label{varphi2}
\xymatrixcolsep{5pc}\xymatrix{
E_{23}^{*}\otimes{E_{23}'}\ar[r]^{\rho_{3}^{*}\otimes(\rho_{3}')^{-1}} & E_{13}^{*} \otimes L_{3}^{*} \otimes L_{3} \otimes E_{13}'\ar[r]^{\rho_{1}^{*}\otimes(\rho_{1}')^{-1}} &  E_{12}^{*} \otimes L_{1}^{*} \otimes L_{3}^{*} \otimes L_{3} \otimes L_{1} \otimes E_{12}' \ar[d]^{(\mu^{*})^{-1}\otimes\mu} \\
& &  E_{12}^{*} \otimes L_{2}^{*} \otimes L_{2} \otimes E_{12}'\ar[d]^{c_{2}}\\
& &  E_{12}^{*} \otimes E_{12}'
}
\end{equation}
%%%
Moreover, we have the following commutative diagram
%%%
\begin{equation}\label{diagc}
\xymatrixcolsep{5pc}\xymatrix{
E_{13}^{*} \otimes L_{3}^{*} \otimes L_{3} \otimes E_{13}' \ar[r]^{c_{3}} \ar[d]_{\rho_{1}^{*} \otimes (\rho_{1}')^{-1}} & E_{13}^{*} \otimes E'_{13} \ar[d]^{\rho_{1}^{*} \otimes (\rho_{1}')^{-1}}\\
E_{12}^{*} \otimes L_{1}^{*} \otimes L_{3}^{*} \otimes L_{3} \otimes L_{1} \otimes E_{12}' \ar[r]_{c_{3}} & E_{12}^{*} \otimes L_{1}^{*} \otimes L_{1} \otimes  E_{12}' 
}
\end{equation}
%%%
The proof is completed by the following commutative diagram
\begin{equation}
\xymatrixcolsep{5pc}\xymatrix{
E_{23}^{*}\otimes E_{23}'\ar[r]^{\varphi_{3}} \ar[d]^{\rho_{3}^{*}\otimes(\rho_{3}')^{-1}} & E_{13}^{*}\otimes E'_{13} \ar[r]^{\varphi_{1}} \ar[d]_{\rho_{1}^{*}\otimes(\rho_{1}')^{-1}}& E_{12}^{*}\otimes E_{12}'\\
E_{13}^{*}\otimes L_{3}^{*}\otimes L_{3} \otimes E'_{13} \ar[ru]^{c_{3}}\ar[d]^{\rho_{1}^{*}\otimes(\rho_{1}')^{-1}} & E_{12}^{*}\otimes L_{1}^{*}\otimes L_{1} \otimes E'_{12}\ar[ru]^{c_{1}} &\\
E_{13}^{*}\otimes L_{1}^{*} \otimes L_{3}^{*}\otimes L_{3} \otimes L_{1} \otimes E'_{13} \ar[rr]_{(\mu^{*})^{-1}\otimes\mu}\ar[ru]^{c_{3}}& &E_{13}^{*}\otimes L_{2}^{*} \otimes  L_{2} \otimes E'_{13}\ar[uu]^{c_{3}}
}
\end{equation}
Notice that the whole diagram is commutative, since every single subdiagram is commutative as a consequence of the definition of $\varphi_{3},\varphi_{1}$, diagram (\ref{diagmu}) and (\ref{diagc}). In particular, the outer square is commutative: by diagram (\ref{varphi2}), the morphism obtained
as the composition of the left, lower and right outer edge coincides 
with the morphism $\varphi_{2}$, hence we have that $\varphi_{2}=\varphi_{1}\circ\varphi_{3}$.\\
%%%
\end{proof}
%%%
\begin{lemma}
\label{lemmamorf}
Let $\mathcal{M}=(E,\rho)$, $\mathcal{N}=(E',\rho')$ and
$\mathcal{P}=(E'',\rho'')$ be $\mathcal{G}$-modules, and let $f\in{\rm
  Hom}_{\mathcal{G}}(\mathcal{M},\mathcal{N})$. Then $f$ induces a
morphism $({\rm Hom}(E,E'),\varphi_{EE'}^{-1})\to({\rm
  Hom}(E,E''),\varphi_{EE''}^{-1})$ in $Desc(Y\xrightarrow{\pi}{M})$.
\end{lemma}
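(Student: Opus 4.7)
The plan is to define the induced morphism as post-composition by $f$. Using the identification $\mathrm{Hom}(E,E') \simeq E^* \otimes E'$ and similarly $\mathrm{Hom}(E,E'') \simeq E^* \otimes E''$, the natural candidate is $f_* := \mathrm{id}_{E^*} \otimes f$, a bundle map on $Y$. What must be verified is that $f_*$ is compatible with the descent data (equivalently, with $\varphi$ rather than $\varphi^{-1}$), i.e.\ that the square
\[
\xymatrix{
\pi_2^* E^* \otimes \pi_2^* E' \ar[r]^{\mathrm{id} \otimes \pi_2^* f} \ar[d]_{\varphi_{EE'}} & \pi_2^* E^* \otimes \pi_2^* E'' \ar[d]^{\varphi_{EE''}} \\
\pi_1^* E^* \otimes \pi_1^* E' \ar[r]_{\mathrm{id} \otimes \pi_1^* f} & \pi_1^* E^* \otimes \pi_1^* E''
}
\]
commutes on $Y_M^{[2]}$.

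I would then unfold $\varphi_{EE'}$ and $\varphi_{EE''}$ via the factorization of equation \eqref{desciso}, through the intermediate bundle $\pi_1^* E^* \otimes L^* \otimes L \otimes \pi_1^*(\,\cdot\,)$. The compatibility square splits as the pasting of two sub-squares: one built from $\rho^* \otimes (\rho')^{-1}$ versus $\rho^* \otimes (\rho'')^{-1}$ (in which the factor $\rho^*$ is common and $f_*$ acts as the identity on it), and one built from the canonical pairing $c: L^* \otimes L \to \underline{\mathbb{C}}$. The latter sub-square commutes tautologically, since $c$ acts only on the $L^*, L$ factors while $\pi_1^* f$ acts only on the $E'$/$E''$ factor.

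The content of the proof lies in the first sub-square, which reduces (after cancelling the common $\pi_1^* E^*$-factor and the common $\rho^*$) to commutativity of
\[
\xymatrix{
\pi_2^* E' \ar[r]^{\pi_2^* f} \ar[d]_{(\rho')^{-1}} & \pi_2^* E'' \ar[d]^{(\rho'')^{-1}} \\
L \otimes \pi_1^* E' \ar[r]_{\mathrm{id}_L \otimes \pi_1^* f} & L \otimes \pi_1^* E''
}
\]
on $Y_M^{[2]}$. Since $\rho'$ and $\rho''$ are invertible, this is equivalent to the identity $\rho'' \circ (\mathrm{id}_L \otimes \pi_1^* f) = \pi_2^* f \circ \rho'$, which is precisely the defining relation \eqref{gmorf} expressing that $f$ is a $\mathcal{G}$-module morphism. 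Pasting the two commuting sub-squares gives the required compatibility of $f_*$ with the descent isomorphisms, so $f_*$ is a morphism in $Desc(Y \xrightarrow{\pi} M)$. The only real obstacle is bookkeeping the various tensor factors and pullbacks over $Y_M^{[2]}$; once the factorization of $\varphi$ via $c$ is written out, the argument is essentially formal.
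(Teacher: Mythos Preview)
Your proposal is correct and follows the same approach as the paper: define the induced map as post-composition by $f$ (equivalently $\mathrm{id}_{E^*}\otimes f$) and verify compatibility with the descent data by reducing, via the factorization \eqref{desciso}, to the defining square \eqref{gmorf} for a $\mathcal{G}$-module morphism. The paper's own proof merely asserts this verification is ``straightforward'' without spelling out the two sub-squares, so your argument is in fact more explicit than what appears there.
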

\begin{proof}
Recall that a gerbe module morphism is an (hermitian) vector bundle morphism satisfying a compatibility condition with the morphism $\rho$, namely diagram (\ref{gmorf}). If we denote by abuse of notation the morphism of vector bundles
by $f$ as well, $f:E'\to E''$, it is then straightforward to show that the morphism
\begin{equation}
\begin{array}{rl}
{\rm Hom}(E,E')&\to {\rm Hom}(E,E'')\\
\beta &\to f\circ\beta
\end{array}
\end{equation}
satisfies the condition for a morphism in the descent category.
\end{proof}
Let $\mathcal{M}=(E,\rho)$ be a nontrivial $\mathcal{G}$-module. Then 
Lemma \ref{lemmaobj} and Lemma \ref{lemmamorf} 
guarantee that we have a functor 
\begin{equation}
\begin{array}{rl}
\tilde{\mathcal{F}}^{\mathcal{M}}:\mathcal{G}-{\rm mod}&\to Desc(Y\xrightarrow{\pi}{M})\\
(E',\rho')&\mapsto ({\rm Hom}(E,E'),\varphi_{EE'}^{-1})\\
f&\mapsto(\beta\to (f\circ\beta))
\end{array}
\end{equation}
Moreover, we have
\begin{proposition}
The functor $\tilde{\mathcal{F}}^{\mathcal{M}}$ is a faithful $\mathbb{C}$-linear functor.
\end{proposition}
\begin{proof}
Since the functor ${\rm Hom}(E,-)$ is $\mathbb{C}$-linear and faithful, the functor $\tilde{\mathcal{F}}^{\mathcal{M}}$ inherits these properties. 
\end{proof}
\begin{proposition}\label{azumend}
To any nontrivial $\mathcal{G}$-module $\mathcal{M}=(E,\rho)$ we can canonically associate an Azumaya algebra $\mathcal{A}^{\mathcal{M}}$ over $M$.
\end{proposition}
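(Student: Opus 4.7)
The plan is to specialise the functor $\tilde{\mathcal{F}}^{\mathcal{M}}$ constructed just above to the module $\mathcal{M}$ itself, so as to obtain $\tilde{\mathcal{F}}^{\mathcal{M}}(\mathcal{M}) = (\mathrm{End}(E), \varphi_{EE}^{-1})$ as a distinguished object of $Desc(Y\xrightarrow{\pi}{M})$, and then to descend this object to $M$ while keeping track of its extra algebraic structure.

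First, I would observe that the endomorphism bundle $\mathrm{End}(E) \simeq E^{*}\otimes E$ on $Y$ is not merely a hermitian vector bundle: it is an algebra bundle, with fibrewise multiplication given by composition of endomorphisms, and its fibres are the matrix algebras $\mathrm{End}(E_{y})$, which are Azumaya algebras over $\mathbb{C}$. The crucial point is that the descent isomorphism $\varphi_{EE}$ defined by (\ref{desciso}) is in fact an isomorphism of \emph{algebra} bundles on $Y^{[2]}$, not only of vector bundles. This should be checked by unpacking (\ref{desciso}): the factor $L^{*}\otimes L$ collapses under $c$ to $\underline{\mathbb{C}}$, and the remaining maps $\rho^{*}\otimes\rho^{-1}$ realise, after this collapse, precisely the conjugation action of $\rho:L\otimes\pi_{1}^{*}E\to\pi_{2}^{*}E$ on endomorphisms, since the line bundle $L$ contributes only a scalar factor to $\mathrm{End}(L\otimes\pi_{1}^{*}E)\simeq \mathrm{End}(\pi_{1}^{*}E)$. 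Conjugation by a bundle isomorphism is, of course, an algebra isomorphism.

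Next, I would apply the descent functor $D:Desc(Y\xrightarrow{\pi}{M})\to\mathrm{Vect}(M)$ recalled in Section \ref{descent} to the object $(\mathrm{End}(E), \varphi_{EE}^{-1})$. Because $D$ is inverse to $\pi^{*}$ and because the fibrewise multiplication on $\mathrm{End}(E)$ is itself a morphism in the descent category (this is exactly the statement that $\varphi_{EE}^{-1}$ is multiplicative, established in the previous paragraph), the resulting bundle $\mathcal{A}^{\mathcal{M}}:=D(\mathrm{End}(E),\varphi_{EE}^{-1})$ on $M$ inherits a canonical structure of an algebra bundle. Naturality and functoriality of $D$ ensure that the construction is canonical in $\mathcal{M}$.

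Finally, I would verify that $\mathcal{A}^{\mathcal{M}}$ satisfies the definition of an Azumaya algebra on $M$. Being an Azumaya algebra is a fibrewise, hence local, property, and on a suitable local trivialisation of the surjective submersion $\pi:Y\to M$ the algebra bundle $\mathcal{A}^{\mathcal{M}}$ is isomorphic to $\mathrm{End}(E)$ restricted to a patch of $Y$ lying over that open set, whose fibres are matrix algebras and therefore Azumaya. The main obstacle, as I see it, is purely computational: honestly checking that $\varphi_{EE}$ respects composition of endomorphisms, so that the descended object is an algebra bundle and not merely a vector bundle. Once this multiplicativity is in place, the rest is an application of the descent equivalence and of the local nature of the Azumaya condition.
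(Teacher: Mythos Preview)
Your proposal is correct and follows essentially the same route as the paper: specialise $\tilde{\mathcal{F}}^{\mathcal{M}}$ to $\mathcal{M}$ itself, observe that $\varphi_{EE}^{-1}$ is an algebra bundle isomorphism (the paper simply asserts this; your conjugation argument is a welcome elaboration), descend via $D$, and conclude that the fibres are matrix algebras hence Azumaya. The only cosmetic discrepancy is that the paper applies the global section functor $\Gamma$ in its definition $\mathcal{A}^{\mathcal{M}}:=\Gamma(M,D(\tilde{\mathcal{F}}^{\mathcal{M}}(\mathcal{M})))$, whereas you stop at the algebra bundle; but the paper has already declared it will conflate the bundle with its algebra of sections, so this is not a substantive difference.
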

\begin{proof}
Define 
\begin{equation}
\mathcal{A}^{\mathcal{M}}:=\Gamma(M,D(\tilde{\mathcal{F}}^{\mathcal{M}}(\mathcal{M})))
\end{equation}
where $D$ is the canonical inverse $D:Desc(Y\xrightarrow{\pi}{M})
\to{\rm Vect}(M)$ and $\Gamma$ is the global section functor.\\
Notice that  ${\rm End}(E):={\rm Hom}(E,E)$ is an algebra bundle over $Y$ and $\varphi_{EE}^{-1}$ is an algebra bundle isomorphism. Since $D$ is a tensor functor, then the bundle $D(\tilde{\mathcal{F}}^{\mathcal{M}}(\mathcal{M}))$ is an algebra bundle over $M$ such that $\pi^{*}(D(\tilde{\mathcal{F}}^{\mathcal{M}}(\mathcal{M})))\simeq{\rm End}(E)$. Moreover, since $\Gamma(Y,{\rm End}(E))$ is an Azumaya algebra over $Y$, the same is true for $\mathcal{A}^{\mathcal{M}}$. 
\end{proof}
\subsection{Equivalence of categories}\label{equivcat}
The Serre-Swan theorem states that the section functor $\Gamma$ induces an equivalence between the category of vector bundles over a compact manifold $M$ and the category of finitely generated projective $C^{\infty}(M;\C)$-modules.\\
Let $\mathcal{M}=(E,\rho)$ be a nontrivial $\mathcal{G}$-module, and consider the category ${\rm pfmod-}\mathcal{A}^{\mathcal{M}}$ of  projective 
and finitely generated right modules over the Azumaya algebra $\mathcal{A}^{\mathcal{M}}$ over $M$.\\
Let $\mathcal{N}=(E',\rho')$ be an arbitrary $\mathcal{G}$-module, and define
\begin{equation}
\mathcal{A}^{\mathcal{M}\mathcal{N}}:=\Gamma(M,D(\tilde{\mathcal{F}}^{\mathcal{M}}(\mathcal{N})))
\end{equation}
\begin{lemma}
$\mathcal{A}^{\mathcal{M}\mathcal{N}}$ is a projective and finitely generated right module over $\mathcal{A}^{\mathcal{M}}$.
\end{lemma}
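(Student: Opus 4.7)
The plan is to descend a natural module structure from $Y$ to $M$ to equip $\mathcal{A}^{\mathcal{M}\mathcal{N}}$ with a right $\mathcal{A}^{\mathcal{M}}$-action, and then to prove finite generation and projectivity by a local-to-global argument using the compactness of $M$.

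On $Y$, the algebra bundle $\mathrm{End}(E)=\mathrm{Hom}(E,E)$ acts on the right on $\mathrm{Hom}(E,E')$ by pre-composition. I would then verify that this right action is compatible with the descent isomorphisms: inspecting (\ref{desciso}), one sees that $\varphi_{EE'}$ acts, after the canonical identifications, schematically by conjugation $f\mapsto (\rho')^{-1}\circ f\circ\rho$, and applying the same construction to $\mathcal{N}=\mathcal{M}$ recovers $\varphi_{EE}$ as the algebra bundle automorphism implicit in the proof of Proposition~\ref{azumend}. A short diagram chase, in the same spirit as Lemma~\ref{lemmaobj}, then shows that $\varphi_{EE'}$ intertwines right multiplication by $\mathrm{End}(E)$ equivariantly with respect to $\varphi_{EE}$; this uses the cancellation $\rho\circ\rho^{-1}=\mathrm{id}$ in the comparison of $\varphi_{EE'}(f\circ g)$ with $\varphi_{EE'}(f)\circ\varphi_{EE}(g)$. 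Since $D$ is a tensor equivalence respecting algebra and module structures, $D(\tilde{\mathcal{F}}^{\mathcal{M}}(\mathcal{N}))$ is a right module bundle on $M$ over $D(\tilde{\mathcal{F}}^{\mathcal{M}}(\mathcal{M}))$, and taking global sections endows $\mathcal{A}^{\mathcal{M}\mathcal{N}}$ with the desired $\mathcal{A}^{\mathcal{M}}$-module structure.

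It then remains to prove that this module is finitely generated and projective, which is the main obstacle. The pointwise fact is that, for finite-dimensional complex vector spaces $V$ and $V'$, the space $\mathrm{Hom}(V,V')$ is finitely generated projective over $\mathrm{End}(V)\simeq M_n(\mathbb{C})$, being a direct summand of a finite direct sum of copies of $\mathrm{End}(V)$; this reflects the Morita equivalence between $\mathbb{C}$ and $M_n(\mathbb{C})$. Locally on $M$, for each small open $U$ admitting a smooth section $s:U\to Y$ of $\pi$, there are canonical identifications $D(\tilde{\mathcal{F}}^{\mathcal{M}}(\mathcal{M}))|_U\simeq\mathrm{End}(s^*E)$ and $D(\tilde{\mathcal{F}}^{\mathcal{M}}(\mathcal{N}))|_U\simeq\mathrm{Hom}(s^*E,s^*E')$, so locally the module bundle is a fibrewise direct summand of a finite free one. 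I would then cover the compact manifold $M$ by finitely many such opens $U_i$, choose a subordinate partition of unity $\{\chi_i\}\subset C^\infty(M)$, and patch the local generators and local splittings into global ones. Because $C^\infty(M)$ lies in the centre of $\mathcal{A}^{\mathcal{M}}$, the partition functions commute with the algebra action and the classical Serre-Swan bookkeeping carries over unchanged, producing a finite generating set for $\mathcal{A}^{\mathcal{M}\mathcal{N}}$ as a right $\mathcal{A}^{\mathcal{M}}$-module together with a projector onto it from a finite free $\mathcal{A}^{\mathcal{M}}$-module.
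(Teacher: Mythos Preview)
Your argument is correct and follows the same overall architecture as the paper's: establish the right $\mathrm{End}(E)$-module structure on $\mathrm{Hom}(E,E')$ over $Y$, check compatibility with the descent isomorphisms $\varphi_{EE'}$ and $\varphi_{EE}$ so that the module structure lives in $Desc(Y\xrightarrow{\pi}M)$, and then push through $D$ and $\Gamma$ to obtain the $\mathcal{A}^{\mathcal{M}}$-module structure on $\mathcal{A}^{\mathcal{M}\mathcal{N}}$.

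The only difference is in how finite generation and projectivity are obtained on $M$. The paper dispatches this in one line: $\mathrm{Hom}(E,E')$ is a finitely generated, fibrewise projective $\mathrm{End}(E)$-module on $Y$, and then one invokes that $D$ and $\Gamma$ are tensor functors which preserve projectivity (the latter being exactly the classical Serre--Swan content). You instead unpack this step concretely, using local sections $s:U\to Y$ of $\pi$ to identify the descended bundles, and then a finite cover plus partition of unity on the compact $M$ to assemble global generators and a global splitting. Your route is the hands-on content behind the paper's abstract invocation; it is more self-contained and makes explicit where compactness of $M$ enters, at the cost of some extra bookkeeping that the paper absorbs into the functorial statement.
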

\begin{proof}
First, notice that ${\rm Hom}(E,E')$ is a finitely generated right module over ${\rm End}(E)$. It is moreover fibrewise projective, since the fibers are finite-dimensional modules over the endomorphism algebra of a finite dimensional vector space, i.e. a full matrix algebra. Taking into account the isomorphisms $\varphi_{EE'}$ and $\varphi_{EE}$, we have that $\tilde{\mathcal{F}}^{\mathcal{M}}(\mathcal{N})$ is right $\tilde{\mathcal{F}}^{\mathcal{M}}(\mathcal{M})$-module in $Desc(Y\xrightarrow{\pi}{M})$. Finally, use that both $D$ and $\Gamma$ are tensor functors, and that they preserve projectivity.\\    
\end{proof}
\begin{lemma}
A morphism $\mathcal{N}\to\mathcal{P}$ of $\mathcal{G}$-modules induces a morphism $\mathcal{A}^{\mathcal{M}\mathcal{N}}\to\mathcal{A}^{\mathcal{M}\mathcal{P}}$ of $\mathcal{A}^{\mathcal{M}}$-modules. 
\end{lemma}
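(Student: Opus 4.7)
The plan is to upgrade the construction of Lemma \ref{lemmamorf} along the chain of functors $\tilde{\mathcal{F}}^{\mathcal{M}} \to D \to \Gamma$, and then verify right-$\mathcal{A}^{\mathcal{M}}$-linearity at the end.

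First, given a $\mathcal{G}$-module morphism $g\colon \mathcal{N}\to\mathcal{P}$, I would apply Lemma \ref{lemmamorf} (with $\mathcal{M}$ fixed and $g$ in the role of $f$) to obtain a morphism $\tilde{\mathcal{F}}^{\mathcal{M}}(g)\colon (\mathrm{Hom}(E,E'),\varphi_{EE'}^{-1})\to(\mathrm{Hom}(E,E''),\varphi_{EE''}^{-1})$ in $Desc(Y\xrightarrow{\pi}{M})$, concretely given on sections by post-composition $\beta\mapsto g\circ\beta$. Applying the canonical inverse $D$ yields a morphism of vector bundles over $M$, and then $\Gamma$ yields a $\mathbb{C}$-linear map $\mathcal{A}^{\mathcal{MN}}\to\mathcal{A}^{\mathcal{MP}}$.

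The essential point is then to verify right $\mathcal{A}^{\mathcal{M}}$-linearity. On $Y$, the bundle $\mathrm{Hom}(E,E')$ is fibrewise a right $\mathrm{End}(E)$-module by composition, and post-composition with $g\colon E'\to E''$ is manifestly a morphism of right $\mathrm{End}(E)$-modules because composition is associative: $(g\circ\beta)\circ\alpha=g\circ(\beta\circ\alpha)$. So $\tilde{\mathcal{F}}^{\mathcal{M}}(g)$ is a morphism of right $\tilde{\mathcal{F}}^{\mathcal{M}}(\mathcal{M})$-modules in the monoidal category $Desc(Y\xrightarrow{\pi}{M})$, exactly as in the preceding lemma. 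Since $D$ is a tensor functor and $\Gamma$ is lax monoidal, they both transport module structures, so $\Gamma(M,D(\tilde{\mathcal{F}}^{\mathcal{M}}(g)))$ is a morphism of right $\mathcal{A}^{\mathcal{M}}$-modules.

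The main subtlety I expect is purely bookkeeping: one must check that the right $\mathrm{End}(E)$-module structure on $\mathrm{Hom}(E,E')$ is actually compatible with the descent data, i.e.\ that the diagram
\begin{equation*}
\xymatrix{
\pi_{1}^{*}\mathrm{Hom}(E,E')\otimes\pi_{1}^{*}\mathrm{End}(E)\ar[r]\ar[d]_{\varphi_{EE'}^{-1}\otimes\varphi_{EE}^{-1}} & \pi_{1}^{*}\mathrm{Hom}(E,E')\ar[d]^{\varphi_{EE'}^{-1}}\\
\pi_{2}^{*}\mathrm{Hom}(E,E')\otimes\pi_{2}^{*}\mathrm{End}(E)\ar[r] & \pi_{2}^{*}\mathrm{Hom}(E,E')
}
\end{equation*}
commutes over $Y^{[2]}$, where the horizontal arrows are composition. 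This is exactly the fact already invoked in the previous lemma, and it follows from the definition of $\varphi_{EE'}$ in \eqref{desciso}: the twist-by-$L$ factors used to trivialise $\rho$ on the middle $E$ cancel against those from $\rho^{*}$ on its neighbour via the pairing $c\colon L^{*}\otimes L\to\underline{\mathbb{C}}$. Once this compatibility is in hand, everything else (associativity of composition, $\mathbb{C}$-linearity, naturality in $g$) is automatic, and the lemma follows.
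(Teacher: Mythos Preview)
Your proposal is correct and follows essentially the same approach as the paper: the paper's one-line proof simply invokes the fact that post-composition $\mathrm{Hom}(E,E')\to\mathrm{Hom}(E,E'')$ is an $\mathrm{End}(E)$-module morphism, which is precisely your associativity observation $(g\circ\beta)\circ\alpha=g\circ(\beta\circ\alpha)$. Your additional verification of compatibility with descent data and the transport through $D$ and $\Gamma$ are exactly the details the paper leaves implicit (having already used them in the preceding lemma).
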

\begin{proof}
Use that any bundle morphism $E'\to{E''}$ induces a morphism ${\rm Hom}(E,E')\to{\rm Hom}(E,E'')$ which is also an ${\rm End}(E)$-module morphism.
\end{proof}
The results above allow us to define a functor 
\begin{equation}
\begin{array}{rl}
\mathcal{F}^{\mathcal{M}}:\mathcal{G}{\rm -mod}&\to {\rm pfmod-}\mathcal{A}^{\mathcal{M}}\\
\mathcal{N}&\mapsto \mathcal{A}^{\mathcal{M}\mathcal{N}}\\
\mathcal{N}\to\mathcal{P}&\mapsto \mathcal{A}^{\mathcal{M}\mathcal{N}} \to \mathcal{A}^{\mathcal{M}\mathcal{P}}
\end{array}
\end{equation} 
\begin{theorem}\label{main}
The functor $\mathcal{F}^{\mathcal{M}}$ is a fully faithful and essentially surjective $\mathbb{C}$-linear functor. 
\end{theorem}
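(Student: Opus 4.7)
The plan is to establish the three properties (faithful, full, essentially surjective) one at a time; the $\mathbb{C}$-linearity is immediate from the construction of $\mathcal{F}^{\mathcal{M}}$ as the composition $\Gamma\circ D\circ\tilde{\mathcal{F}}^{\mathcal{M}}$ of three $\mathbb{C}$-linear functors.

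Faithfulness is short: the previous proposition gives that $\tilde{\mathcal{F}}^{\mathcal{M}}$ is faithful, $D$ is an equivalence and so faithful, and $\Gamma$ is faithful on vector bundles over the compact manifold $M$ by the classical Serre-Swan theorem. Composing yields faithfulness of $\mathcal{F}^{\mathcal{M}}$.

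For fullness, given a morphism $g\colon\mathcal{A}^{\mathcal{M}\mathcal{N}}\to\mathcal{A}^{\mathcal{M}\mathcal{P}}$ of $\mathcal{A}^{\mathcal{M}}$-modules, I would first invert Serre-Swan to produce a morphism $\tilde{g}$ of $\mathcal{A}^{\mathcal{M}}$-module bundles on $M$, and then pull back along $\pi$ to obtain a bundle map $\mathrm{Hom}(E,E')\to\mathrm{Hom}(E,E'')$ on $Y$ that is $\mathrm{End}(E)\cong\pi^{*}\mathcal{A}^{\mathcal{M}}$-linear and intertwines the descent data $\varphi_{EE'}^{-1}$ and $\varphi_{EE''}^{-1}$. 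The crucial input at this point is the fibrewise Morita equivalence between $\mathrm{End}(E_{y})$-modules and complex vector spaces: any $\mathrm{End}(E)$-linear bundle map $\mathrm{Hom}(E,E')\to\mathrm{Hom}(E,E'')$ is necessarily of the form $\beta\mapsto f\circ\beta$ for a unique bundle morphism $f\colon E'\to E''$. Unpacking the compatibility of $\pi^{*}\tilde{g}$ with the descent data using the explicit formula (\ref{desciso}) for $\varphi_{EE'}$ should recover precisely the diagram (\ref{gmorf}) asserting that $f$ is a $\mathcal{G}$-module morphism, and by construction $\mathcal{F}^{\mathcal{M}}(f)=g$.

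For essential surjectivity, I would start from a projective finitely generated right $\mathcal{A}^{\mathcal{M}}$-module $P$, realise it by Serre-Swan as $\Gamma(M,F)$ for a vector bundle $F$ on $M$ with a compatible right $\mathcal{A}^{\mathcal{M}}$-module-bundle structure, and apply $\pi^{*}$ to land in $Desc(Y\xrightarrow{\pi}M)$ with an object whose underlying bundle is a right $\mathrm{End}(E)$-module bundle carrying the canonical (identity) descent datum. Using Morita equivalence again, now bundle-wise, I would set $E':=\pi^{*}F\otimes_{\mathrm{End}(E)}E$, so that $\pi^{*}F\simeq\mathrm{Hom}(E,E')$ as $\mathrm{End}(E)$-module bundles. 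It then remains to transport the trivial descent datum on $\pi^{*}F$ through this Morita isomorphism and to use the given $\rho$ on $E$ to extract an isomorphism $\rho'\colon L\otimes\pi_{1}^{*}E'\to\pi_{2}^{*}E'$; the cocycle condition for $\rho'$ is forced by the cocycle conditions for $\rho$ and $\mu$ together with that of the descent datum, essentially by running the diagrams in the proof of Lemma \ref{lemmaobj} backwards. The resulting pair $(E',\rho')$ is a $\mathcal{G}$-module with $\mathcal{F}^{\mathcal{M}}(E',\rho')\cong P$.

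The step I expect to be the main obstacle is the reconstruction of $\rho'$ inside essential surjectivity: one must globalise the Morita equivalence from fibres to bundles in a way that is visibly compatible with descent, and then bookkeep the cocycle condition of the transported descent datum against the defining diagram for $\mu$ to recover the associativity of $\rho'$. The other technical point worth watching is that the Morita argument used in the fullness step needs the identification $\mathrm{End}(E)\cong\pi^{*}\mathcal{A}^{\mathcal{M}}$ as algebra bundles, which is exactly what the proof of Proposition \ref{azumend} provides.
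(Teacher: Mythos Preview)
Your proposal is correct and follows essentially the same route as the paper: faithfulness from the composition of faithful functors, fullness via the Morita-type identification of $\mathrm{End}(E)$-linear maps $\mathrm{Hom}(E,E')\to\mathrm{Hom}(E,E'')$ with bundle maps $E'\to E''$ followed by checking descent compatibility, and essential surjectivity by Serre-Swan, pullback, and the Morita construction $E':=\pi^{*}F\otimes_{\mathrm{End}(E)}E$ with $\rho'$ built from $\rho$. The paper carries out the construction of $\rho'$ by the explicit chain of isomorphisms using $\mathrm{End}(\pi_1^{*}E)\simeq\mathrm{End}(\pi_1^{*}E\otimes L)$ and the fact that $\rho$ is a module morphism along $\rho_{*}$, which is precisely the bookkeeping you anticipate as the main obstacle.
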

\begin{proof}
The faithfulness and $\mathbb{C}$-linearity is guaranteed by the fact that the functors $\mathcal{F}^{\mathcal{M}}$, $D$, and $\Gamma$ are faithful and $\mathbb{C}$-linear. We have then to prove fullness and essential surjectivety.
\begin{enumerate}
\item Fullness: Let $f\in{\rm Hom}_{\mathcal{A}^{\mathcal{M}}}(\mathcal{A}^{\mathcal{M}\mathcal{N}},\mathcal{A}^{\mathcal{M}\mathcal{P}})$. Since both $\Gamma$ and $D$ are full tensor functors, to $f$ there corresponds a morphism ${\rm Hom}(E,E')\to{\rm Hom}(E,E'')$ which is also an ${\rm End}(E)$-module morphism. We have the following canonical isomorphism
\begin{equation}
{\rm Hom}({\rm Hom}(E,E'),{\rm Hom}(E,E''))\simeq {\rm End}(E)\otimes {\rm Hom}(E',E'')\,\,;
\end{equation}
The space of bundle homomorphisms ${\rm Hom}_{\rm Vect}(E,E')$
is then obtained from the space of bundle morphisms
from the trivial bundle $\underline{\mathbb{C}}$ to the homomorphism
bundle, ${\rm Hom}_{\rm Vect}(\underline{\mathbb{C}},{\rm Hom}(E,E'))\cong{\rm Hom}_{\rm Vect}(E,E')$.
As consequence, ${\rm Hom}_{\rm Vect}({\rm Hom}(E,E'),{\rm Hom}(E,E''))\cong
{\rm Hom}_{\rm Vect}(\underline{\mathbb{C}}, {\rm End}(E)\otimes{\rm Hom}(E',E'')$.
Under this isomorphism, the action of the algebra bundle
${\rm End}(E)$ is multiplication on the first factor.
Hence those bundle morphisms in ${\rm Hom}_{\rm Vect}({\rm Hom}(E,E'),{\rm Hom}(E,E''))$
that are ${\rm End}(E)$-morphisms are in bijection with
${\rm Hom}_{\rm Vect}(\underline{\mathbb{C}},{\rm Hom}(E',E''))\cong{\rm Hom}_{\rm Vect}(E',E'')$.
(This proof works quite generally in a symmetric monoidal
category with duals in which all objects have invertible dimension.)\\
Inspection shows that the compatibility with the morphisms $\varphi_{EE'}$ and $\varphi_{EE''}$ over $Y^{[2]}_{M}$ guarantees that $\tilde{f}$ is indeed a morphism of $\mathcal{G}$-modules.
\item Essential surjectivity: Let $\mathcal{B}\in{\rm pfmod-}\mathcal{A}^{\mathcal{M}}$. Recall that $\mathcal{A}^{\mathcal{M}}$  is a projective and finitely generated module over the algebra $C^{\infty}(M;\mathbb{C})$: indeed, $C^{\infty}(M;\mathbb{C})$ sits in the center of $\mathcal{A}^{\mathcal{M}}$, and the action is via multiplication. This implies that $\mathcal{B}$ is a finitely generated and projective $C^{\infty}(M;\mathbb{C})$-module as well. Hence, there exists an object $B \in {\rm Vect}(M)$ which is a right module over 
the algebra bundle $D(\tilde{\mathcal{F}}^{\mathcal{M}}(\mathcal{M}))$ on
$M$, and such that $\Gamma(B)\simeq{\mathcal{B}}$.\\
Consider the bundle $\pi^{*}B$ over $Y$. It is a finitely generated and projective right module over ${\rm End}(E)\simeq\pi^{*}D(\tilde{\mathcal{F}}^{\mathcal{M}}(\mathcal{M}))$. Notice that this means in particular that the fiber of $\pi^{*}B$ over a point $y$ is a finitely generated and projective module over ${\rm End}(E_{y})$. A classical result concerning endomorphism algebras states that for any finite dimensional vector space $V$ and any finitely generated and projective right module $Q$ over ${\rm End}(V)$, we have a canonical isomorphism
%%%
\begin{equation}
Q \simeq {\rm Hom}(V,Q\otimes_{\rm End(V)}V)
\end{equation} 
%%%
as right ${\rm End(V)}$-modules. Applying this result to vector bundles, we have
\begin{equation}
\pi^{*}B\simeq{\rm Hom}(E,\pi^{*}B\otimes_{{\rm End}(E)}E)
\end{equation}
Finally, we have to show that the bundle $\pi^{*}B\otimes_{{\rm End}(E)}E$ comes equipped with a gerbe module morphism. This is indeed induce by the morphism $\rho$ (recall that $\mathcal{M}=(E,\rho)$) as follows. First, recall that we have canonically
\begin{equation}\label{caniso}
{\rm End}(\pi_{1}^{*}E)\simeq{\rm End}(\pi_{1}^{*}E\otimes L)
\end{equation}
Moreover, the morphism $\rho:\pi_{1}^{*}E\otimes L\to\pi_{2}^{*}E$ is a module morphism along the algebra bundle morphism $\rho_{*}:{\rm End}(\pi_{1}^{*}E)\to{\rm End}(\pi^{*}_{2}E)$, where we have used the isomorphism (\ref{caniso}). We have then the following isomorphism
%%%
\begin{equation}
\begin{array}{rl}
\pi_{1}^{*}(\pi^{*}B\otimes_{{\rm End}(E)}E)\otimes L &\simeq (\pi_{1}^{*}\pi^{*}B\otimes_{{\rm End}(\pi_{1}^{*}E)}\pi_{1}^{*}E)\otimes L\\
  &\simeq\pi_{1}^{*}\pi^{*}B\otimes_{{\rm End}(\pi_{1}^{*}E)}(\pi_{1}^{*}E\otimes L)\\
 &\simeq\pi_{2}^{*}\pi^{*}B\otimes_{{\rm End}(\pi_{2}^{*}E)}\pi_{2}^{*}E\\
&\simeq\pi_{2}^{*}(\pi^{*}B\otimes_{{\rm End}(E)}E)
\end{array}
\end{equation}
%%%
which can proved to be compatible with $\mu$.
\end{enumerate}
\end{proof}
\section{Gerbe modules on Lie groupoids}\label{gerbegroupoids}
In this section we will see how Theorem \ref{main} can be extended from the category of smooth manifolds to the category of Lie groupoids.
%%%
\subsection{Vector bundles on Lie groupoids}\label{vbgroupoids}
%%%
In the following we recall some basic results about the category of vector bundles on Lie groupoids.\\
Let $G=(G_{0},G_{1})$ be a Lie groupoid with source and target map $s,t:G_{1}\to{G_{0}}$, respectively. Recall that a vector bundle over $G$ is given by a vector bundle $E\to{G_{0}}$ together with an isomorphism $\psi_{E}:s^{*}E\xrightarrow{\simeq}{t^{*}E}$ which is associative over ${G_{1}}\times_{G_{0}}G_{1}$. A morphism between two vector bundles $(E,\psi_{E})$ and $(F,\psi_{F})$ over $G$ is given by a morphism of vector bundles $f:E\to{F}$ over $G_{0}$ which is compatible with the isomorphisms $\psi_{E}$ and $\psi_{F}$ over ${G_{1}}$. Finite
rank vector bundles on $G$ and their morphisms form a $\mathbb{C}$-linear symmetric monoidal
category with duals ${\rm Vect(G)}$: indeed, the dual of $(E,\psi_{E})$ is given by $(E^{*},(\psi_{E}^{*})^{-1})$. The monoidal unit in ${\rm Vect(G)}$ is given by $(\underline{\mathbb{C}},id)$, which by abuse of notation we denote $\underline{\mathbb{C}}$. \\

Let $(\mathcal{C},\otimes)$ be a monoidal category, and let $U$ and $V$ in $\mathcal{C}$. The internal hom ${\rm \underline{Hom}}(U,V)$, if it exists, is an object in $\mathcal{C}$ for which there is an isomorphism
\begin{displaymath}
{\rm Hom}_{\mathcal{C}}(W,{\rm \underline{Hom}}(U,V)) \simeq {\rm Hom}_{\mathcal{C}}(W\otimes{U},V)
\end{displaymath} 
%%%
which is natural in $U$, $V$ and $W$.
\begin{lemma}
The category ${\rm Vect}(G)$ admits an internal hom ${\rm \underline{Hom}}(E,F)$ for all $E$ and $F$. 
\end{lemma}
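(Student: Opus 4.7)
The plan is to take $\underline{\mathrm{Hom}}(E,F):=(E^{*}\otimes F,\,(\psi_{E}^{*})^{-1}\otimes\psi_{F})$, formed using the dual and tensor product structures that $\mathrm{Vect}(G)$ already admits (as recalled immediately before the lemma). The statement is then essentially formal: in any symmetric monoidal category with duals, the internal hom is represented by $X^{*}\otimes Y$, so the content of the proof reduces to verifying that the standard duality data in $\mathrm{Vect}(G)$ behaves as expected.

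First I would check that $\underline{\mathrm{Hom}}(E,F)$ really is an object of $\mathrm{Vect}(G)$, i.e.\ that the cocycle condition over $G_{1}\times_{G_{0}}G_{1}$ holds for $(\psi_{E}^{*})^{-1}\otimes\psi_{F}$. This is immediate from the corresponding conditions for $\psi_{E}$ and $\psi_{F}$, together with the functoriality of $(-)^{*}$ and $\otimes$ on morphisms of vector bundles over $G_{0}$.

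Next I would construct the natural isomorphism
\[
\mathrm{Hom}_{\mathrm{Vect}(G)}(W,E^{*}\otimes F)\;\simeq\;\mathrm{Hom}_{\mathrm{Vect}(G)}(W\otimes E,F)
\]
by the usual adjunction maps: sending $\phi$ to $(\mathrm{ev}_{E}\otimes \mathrm{id}_{F})\circ(\mathrm{id}_{W}\otimes\phi\otimes\mathrm{id}_{E})$ composed appropriately with the symmetry, and the inverse via the coevaluation $\mathrm{coev}_{E}:\underline{\mathbb{C}}\to E\otimes E^{*}$. On underlying vector bundles over $G_{0}$ this is the standard adjunction in $\mathrm{Vect}(G_{0})$, so it is an isomorphism of $\mathbb{C}$-vector spaces; the only thing to verify is that both maps land in the subspace of morphisms compatible with the equivariance data over $G_{1}$. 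For this it suffices to note that $\mathrm{ev}_{E}$ and $\mathrm{coev}_{E}$ are themselves morphisms in $\mathrm{Vect}(G)$ from $E^{*}\otimes E$ and to $E\otimes E^{*}$ respectively — i.e.\ they intertwine $(\psi_{E}^{*})^{-1}\otimes\psi_{E}$ (resp.\ $\psi_{E}\otimes(\psi_{E}^{*})^{-1}$) with the identity on $\underline{\mathbb{C}}$ — which is just the definition of the dual equivariance structure $(\psi_{E}^{*})^{-1}$. Naturality in $U$, $V$, $W$ is then inherited from the naturality at the $G_{0}$ level.

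The main obstacle, such as it is, lies not in producing the adjunction but in bookkeeping: one must check that the triangle identities for the duality $(E,E^{*})$ hold in $\mathrm{Vect}(G)$, not merely in $\mathrm{Vect}(G_{0})$. This reduces to commutativity of a pair of small diagrams of isomorphisms over $G_{1}$ obtained by applying $s^{*}$ and $t^{*}$ and comparing via $\psi_{E}$ and $(\psi_{E}^{*})^{-1}$, and commutativity follows from the fact that transposing $\psi_{E}$ and then inverting is compatible with evaluation/coevaluation at the fibrewise level. Once this is in place the adjunction isomorphism above is established, so $\underline{\mathrm{Hom}}(E,F)=E^{*}\otimes F$ represents the internal hom.
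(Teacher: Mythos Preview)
Your proposal is correct and follows essentially the same approach as the paper: both define $\underline{\mathrm{Hom}}(E,F)=(E^{*}\otimes F,(\psi_{E}^{*})^{-1}\otimes\psi_{F})$ and verify the adjunction by using that the evaluation/trace pairing $E^{*}\otimes E\to\underline{\mathbb{C}}$ is a morphism in $\mathrm{Vect}(G)$, reducing to the known adjunction in $\mathrm{Vect}(G_{0})$. The only minor difference is that the paper exhibits the map $\Xi$ in one direction and checks equivariance via an explicit diagram involving $tr_{E}$, whereas you also invoke the coevaluation and triangle identities to handle the inverse; this makes your version slightly more self-contained, but the underlying argument is the same.
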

%%%
\begin{proof}
Define 
\begin{equation}
{\rm \underline{Hom}}(E,F):=(E^{*}\otimes{F},(\psi_{E}^{*})^{-1}\otimes\psi_{F})
\end{equation}
and let $W\in {\rm Vect}(G)$. First, notice that
\begin{displaymath}
{\rm Hom}_{ {\rm Vect}(G)}(W,E^{*}\otimes{F})\subset {\rm Hom}_{ {\rm Vect}(G_{1})}(W,E^{*}\otimes{F})\simeq^{\Xi} {\rm Hom}_{ {\rm Vect}(G_{1})}(W\otimes {E},F)
\end{displaymath}
where 
\begin{equation}
\Xi(f):W\otimes{E}\xrightarrow{f\otimes{id}}E^{*}\otimes{F}\otimes{E}\xrightarrow{tr_{E}}{F}
\end{equation}
for $f\in{\rm Hom}(W,E^{*}\otimes{F})$ and $tr_{E}:E^{*}\otimes{E}\to\underline{\mathbb{C}}$ the canonical pairing.
%%%
By using the invariance of the trace pairing $tr_{E}$ under isomorphisms, we have the following commutative diagram
%%%
\begin{equation}
\xymatrixcolsep{5pc}\xymatrix{
s^{*}W\otimes s^{*}E \ar[r]^{\psi_{W}\otimes\psi_{E}} \ar[d]_{s^{*}(f)\otimes{\rm id}} &t^{*}W\otimes t^{*}E \ar[d]^{t^{*}(f)\otimes{\rm id}}\\
s^{*}E^{*}\otimes s^{*}F \otimes s^{*}E \ar[r]^{(\psi_{E}^{*})^{-1}\otimes\psi_{F}\otimes\psi_{E}} \ar[d]_{tr_{s^{*}E}} &t^{*}E^{*}\otimes t^{*}F \otimes t^{*}E \ar[d]^{tr_{t^{*}E}} \\
s^{*}F \ar[r]^{\psi_{F}} & t^{*}F
}
\end{equation}
%%%
for any $f\in{\rm Hom}_{ {\rm Vect}(G)}(W,E^{*}\otimes{F})$. The outer commutative square assures then that the isomorphism $\Xi$ maps ${\rm Hom}_{ {\rm Vect}(G)}(W,E^{*}\otimes{F})$ to 
the subspace
${\rm Hom}_{ {\rm Vect}(G)}(W\otimes{E},F) \subset {\rm Hom}_{ {\rm Vect}(G_1)}(W,E^{*}\otimes{F})$. Moreover, notice that the isomorphism $\Xi$ is natural in $W$, $E$, and $F$. 

\end{proof}
%%%
In particular, since the construction above is natural, we have the internal hom functor
\begin{displaymath}
\underline{\rm Hom}(-,-): {\rm Vect}(G)^{op} \times {\rm Vect}(G) \to {\rm Vect}(G)
\end{displaymath}
\begin{lemma}
For any $E\in {\rm Vect}(G)$, ${\rm \underline{End}}(E)$ is an algebra object
in ${\rm Vect}(G)$.
\end{lemma}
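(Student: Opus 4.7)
The plan is to exhibit $\underline{\rm End}(E)$ as an algebra object by transporting the usual fibrewise algebra structure on the endomorphism bundle $E^{*}\otimes E \to G_{0}$ to the internal setting of ${\rm Vect}(G)$. Concretely, I will define a multiplication and unit as composites involving the evaluation and coevaluation of the duality $(E,E^{*})$, and then verify that both are morphisms in ${\rm Vect}(G)$, not merely in ${\rm Vect}(G_{0})$.

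First I would write down the candidate structure morphisms
\[
m := {\rm id}_{E^{*}} \otimes {\rm ev}_{E} \otimes {\rm id}_{E} \colon (E^{*} \otimes E) \otimes (E^{*} \otimes E) \to E^{*} \otimes E,
\]
\[
\eta := {\rm coev}_{E} \colon \underline{\mathbb{C}} \to E^{*} \otimes E,
\]
where ${\rm ev}_{E} \colon E^{*} \otimes E \to \underline{\mathbb{C}}$ and ${\rm coev}_{E} \colon \underline{\mathbb{C}} \to E \otimes E^{*}$ are the duality maps on $G_{0}$. Under the canonical identification $E^{*}\otimes E \simeq {\rm End}(E)$, the map $m$ is fibrewise just the composition of endomorphisms and $\eta$ sends $1$ to the identity endomorphism. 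Consequently the associativity of $m$ and the left/right unit axioms hold automatically, since they reduce to the corresponding identities in each fibre ${\rm End}(E_{x})$, which are equalities of bundle morphisms on $G_{0}$.

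The key step is to verify that $m$ and $\eta$ are morphisms in ${\rm Vect}(G)$, i.e.\ that they intertwine the transition isomorphism $(\psi_{E}^{*})^{-1} \otimes \psi_{E}$ on $\underline{\rm End}(E)$ (and its tensor square on the source of $m$) with the identity on $\underline{\mathbb{C}}$. Since the dual of $(E,\psi_{E})$ in the symmetric monoidal category with duals ${\rm Vect}(G)$ is by definition $(E^{*},(\psi_{E}^{*})^{-1})$, the evaluation and coevaluation of this duality are themselves morphisms in ${\rm Vect}(G)$: this is precisely the compatibility of ${\rm ev}_{E}$ and ${\rm coev}_{E}$ with $\psi_{E}$ and $(\psi_{E}^{*})^{-1}$. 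Tensoring with identities then shows that $m$ and $\eta$ are $G$-equivariant.

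The main obstacle, and essentially the only substantive content, is this equivariance check. Conceptually it is the observation that $\psi_{E}$ acts on $E^{*}\otimes E$ by conjugation under the identification with ${\rm End}(E)$, and conjugation by a bundle isomorphism is always an algebra homomorphism; equivalently, it is the naturality of duality data in a symmetric monoidal category with duals. Once this compatibility is in place, all remaining steps are formal.
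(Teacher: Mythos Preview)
Your proposal is correct and follows essentially the same approach as the paper: both construct the multiplication on $\underline{\rm End}(E)=E^{*}\otimes E$ out of the duality data and braidings already present in the symmetric monoidal category ${\rm Vect}(G)$, so that equivariance is automatic and associativity reduces to a fibrewise check. One cosmetic slip: with your stated conventions ${\rm ev}_{E}\colon E^{*}\otimes E\to\underline{\mathbb{C}}$ and ${\rm coev}_{E}\colon\underline{\mathbb{C}}\to E\otimes E^{*}$, the middle contraction in $m$ must act on $E\otimes E^{*}$ and $\eta$ must land in $E^{*}\otimes E$, so a braiding is needed in each---the paper's version simply writes these braidings out explicitly, and otherwise the arguments coincide (the paper in fact omits the unit and the equivariance discussion you supply).
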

%%%
\begin{proof}
The multiplication morphism 
%%%
\begin{equation}
m:{\rm \underline{End}}(E) \otimes {\rm \underline{End}}(E) \to {\rm \underline{End}}(E)
\end{equation}
%%%
is induced by the vector bundle morphism
%%%
\begin{equation}
E^{*}\otimes E \otimes E^{*} \otimes E \xrightarrow{{\rm id}\otimes{c_{E^{*}E}}} E^{*}\otimes E \otimes E \otimes E^{*} \xrightarrow{{\rm id}\otimes{c_{EE}}\otimes{\rm id}} E^{*}\otimes E \otimes E \otimes E^{*} \xrightarrow{{\rm tr}_{E}\otimes{c_{EE^{*}}}} E^{*}\otimes E
\end{equation}
%%%
where $c_{AB}$ denote the symmetric braiding between the two vector bundles $A$ and $B$.\\
%%%
The associativity of $m$ is guaranteed by the associativity of the tensor product of vector bundles and bundle morphisms.
\end{proof}
%%%
Similarly, we have the following
%%%
\begin{lemma}
For any $E$ and $F$ in ${\rm Vect}(G)$, ${\rm \underline{Hom}}(E,F)$ is a right module over ${\rm \underline{End}}(E)$ and a left module over ${\rm \underline{End}}(F)$.
\end{lemma}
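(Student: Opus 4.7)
My plan is to mirror the construction of the multiplication on $\underline{\rm End}(E)$ given in the preceding lemma. Concretely, I would define the right action of $\underline{\rm End}(E)$ on $\underline{\rm Hom}(E,F)$ by the bundle morphism
\[
E^{*}\otimes F \otimes E^{*}\otimes E \longrightarrow E^{*}\otimes F
\]
obtained by applying the symmetric braiding to move the rightmost $E$ next to the middle $E^{*}$, then contracting with the trace pairing $\text{tr}_{E}\colon E^{*}\otimes E\to\underline{\mathbb{C}}$, and reordering the remaining factors. The left action of $\underline{\rm End}(F)$ on $\underline{\rm Hom}(E,F)$ is defined in the same style, now using the trace pairing $\text{tr}_{F}$ to contract the middle pair $F\otimes F^{*}$ (after a braiding). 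Thus both actions are built entirely from coherence isomorphisms, braidings and evaluation maps of the symmetric monoidal category with duals ${\rm Vect}(G)$.

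The main content is then to verify that these action maps are genuine morphisms in ${\rm Vect}(G)$, i.e.\ that they intertwine the equivariance isomorphisms $(\psi_{E}^{*})^{-1}\otimes\psi_{E}$, $(\psi_{E}^{*})^{-1}\otimes\psi_{F}$ and $(\psi_{F}^{*})^{-1}\otimes\psi_{F}$ over $G_{1}$. This reduces, exactly as in the proofs of the two preceding lemmas, to two ingredients: the naturality of the symmetric braiding $c_{AB}$, which commutes with arbitrary isomorphisms of its factors, and the invariance of the trace pairing under isomorphisms, i.e.\ $\text{tr}_{t^{*}E}\circ((\psi_{E}^{*})^{-1}\otimes\psi_{E})=\psi_{\underline{\mathbb{C}}}\circ\text{tr}_{s^{*}E}$. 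I would display the resulting commutative square analogous to the one appearing in the proof of the internal hom lemma; its outer edges encode exactly the compatibility of the action with the $\psi$'s.

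Finally, I would check the module axioms. Associativity of the right action over $\underline{\rm End}(E)$ and of the left action over $\underline{\rm End}(F)$, as well as the fact that the two actions commute (so that $\underline{\rm Hom}(E,F)$ is in fact an $\underline{\rm End}(F)$-$\underline{\rm End}(E)$-bimodule), are formal consequences of associativity of $\otimes$ and the snake identities for the trace and coevaluation, which hold in any symmetric monoidal category with duals. The unit axiom is equally routine: identifying the unit of $\underline{\rm End}(E)$ with the coevaluation $\underline{\mathbb{C}}\to E^{*}\otimes E$, the action reduces via a snake identity to the identity on $E^{*}\otimes F$.

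I expect the only delicate point to be the equivariance verification, and even this is a direct repetition of the argument used for the multiplication of $\underline{\rm End}(E)$; everything else is standard symmetric-monoidal bookkeeping.
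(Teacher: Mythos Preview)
Your proposal is correct and matches the paper's intent: the paper does not give a proof of this lemma at all, but simply introduces it with ``Similarly, we have the following'', indicating that the argument is a direct adaptation of the construction of the multiplication on $\underline{\rm End}(E)$ in the preceding lemma. Your write-up carries out exactly that adaptation---defining the action via braidings and the trace pairing, checking equivariance via naturality of $c_{AB}$ and invariance of $\mathrm{tr}_E$, and verifying the module axioms from the duality (snake) identities---so you have supplied the details the paper leaves implicit.
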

\begin{lemma}
For any $E$, $M$ and $N$ in ${\rm Vect}(G)$, we have a canonical 
isomorphism of vector spaces
\begin{displaymath}
{\rm Hom}_{\underline{\rm End}(E)}({\rm \underline{Hom}}(E,M),{\rm \underline{Hom}}(E,N))\simeq{\rm{Hom}_{{\rm Vect}(G)}(M,N)}
\end{displaymath}
\begin{proof}
The proof follows from 
\begin{displaymath}
\begin{array}{rl}
{\rm Hom}_{\underline{\rm End}(E)}({\rm \underline{Hom}}(E,M),{\rm \underline{Hom}}(E,N))&\simeq{\rm Hom}_{\underline{\rm End}(E)}(E^{*}\otimes M,E^{*} \otimes N))\\
&\simeq{\rm Hom}_{\underline{\rm End}(E)}(E^{*} \otimes M \otimes E, N)\\
&\simeq{\rm Hom}_{\underline{\rm End}(E)}(M \otimes {\underline {\rm End}}(E), N)\\
&\simeq{\rm Hom}_{{\rm Vect}(G)}(M, N)
\end{array}
\end{displaymath}
\end{proof}
%%%
\begin{remark} The results above hold in general for any symmetric monoidal category $\mathcal{C}$ with duals. See \cite{ostrik} for a discussion of these results in the case of more general tensor categories and module 
categories over them.
\end{remark}
%%%
\end{lemma}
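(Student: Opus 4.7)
The plan is to reduce, via the explicit identification ${\rm \underline{Hom}}(E, X) \simeq E^* \otimes X$ from the preceding lemma, to the Morita-type statement
\begin{displaymath}
{\rm Hom}_{{\rm \underline{End}}(E)}(E^* \otimes M,\, E^* \otimes N) \simeq {\rm Hom}_{{\rm Vect}(G)}(M, N),
\end{displaymath}
i.e.\ to prove that the functor $E^* \otimes (-)$ is fully faithful from ${\rm Vect}(G)$ into right ${\rm \underline{End}}(E)$-modules in ${\rm Vect}(G)$. The natural forward map is $\Phi: \phi \mapsto {\rm id}_{E^*} \otimes \phi$, and it is tautologically ${\rm \underline{End}}(E)$-linear because the right ${\rm \underline{End}}(E) = E^* \otimes E$-action on $E^* \otimes X$ lives on the $E^*$-factor and manifestly commutes with ${\rm id}_{E^*} \otimes \phi$. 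The substantive part is therefore to construct an inverse.

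I would proceed through a chain of three natural isomorphisms, all of which are available because $E$ is dualizable in the symmetric monoidal category ${\rm Vect}(G)$. First, the tensor-Hom adjunction for the dual pair $(E, E^*)$ gives
\begin{displaymath}
{\rm Hom}_{{\rm Vect}(G)}(E^* \otimes M,\, E^* \otimes N) \simeq {\rm Hom}_{{\rm Vect}(G)}(E^* \otimes M \otimes E,\, N).
\end{displaymath}
Second, the symmetric braiding regroups $E^* \otimes M \otimes E$ as $M \otimes E^* \otimes E = M \otimes {\rm \underline{End}}(E)$. Third, the free-module adjunction identifies ${\rm \underline{End}}(E)$-linear maps out of the free right module $M \otimes {\rm \underline{End}}(E)$ with bare vector bundle maps out of $M$. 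The composite of these three is the desired inverse $\Psi$, and the round-trip identities $\Phi \circ \Psi = {\rm id}$ and $\Psi \circ \Phi = {\rm id}$ follow from the duality zig-zag identities together with the unitality and associativity of ${\rm \underline{End}}(E)$.

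The main obstacle is checking that, under the first two isomorphisms, the ${\rm \underline{End}}(E)$-linearity condition on the original morphism $f:E^*\otimes M \to E^*\otimes N$ corresponds exactly to being an ${\rm \underline{End}}(E)$-module map out of the free right module $M \otimes {\rm \underline{End}}(E)$. Equivalently, one must verify that the pre-composition action of ${\rm \underline{End}}(E) = {\rm \underline{Hom}}(E, E)$ on ${\rm \underline{Hom}}(E, X) = E^* \otimes X$ corresponds, after moving the second copy of $E$ over via the adjunction and applying the braiding, to the canonical right-multiplication action of ${\rm \underline{End}}(E)$ on itself. Groupoid-equivariance is handled uniformly throughout, since every ingredient---the duality unit and counit, the symmetric braiding, and the algebra structure on ${\rm \underline{End}}(E)$---is built from the standard structural morphisms of ${\rm Vect}(G)$; indeed the whole argument is formal in any symmetric monoidal category with duals, in line with the remark that closes the subsection.
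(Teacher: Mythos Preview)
Your proposal is correct and follows essentially the same route as the paper: identify $\underline{\rm Hom}(E,X)$ with $E^{*}\otimes X$, use the duality adjunction to move a copy of $E$ to the source, regroup via the braiding to recognize a free right $\underline{\rm End}(E)$-module $M\otimes\underline{\rm End}(E)$, and then apply the free-module adjunction to land in ${\rm Hom}_{{\rm Vect}(G)}(M,N)$. If anything, you are more explicit than the paper in naming the forward map $\phi\mapsto{\rm id}_{E^{*}}\otimes\phi$ and in flagging that the $\underline{\rm End}(E)$-linearity condition must be tracked through the first two isomorphisms, a point the paper's chain leaves implicit.
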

We conclude this section on vector bundles over Lie groupoids with the following
%%%
\begin{lemma}
Let $L$ be a line bundle over a Lie groupoid $G=(G_{0},G_{1})$. Then there is a canonical isomorphism
\begin{equation}
L^{*} \otimes L \xrightarrow{\simeq} \underline{\mathbb{C}}
\end{equation}
\end{lemma}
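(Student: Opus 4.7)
The plan is to take as our candidate map the fibrewise evaluation pairing $c_L\colon L^*\otimes L\to\underline{\mathbb{C}}$ on $G_0$, the same construction already used in the manifold setting (cf.\ the map $c$ appearing in \eqref{desciso}), and verify that it lifts to a morphism in $\mathrm{Vect}(G)$ and is invertible there.

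First, viewing $L$ just as a rank-one vector bundle on $G_0$, the evaluation map $c_L\colon L^*\otimes L\to\underline{\mathbb{C}}$ is a morphism of vector bundles on $G_0$. Because $L$ has rank one, on each fibre $c_L$ is the canonical isomorphism $\ell^*\otimes\ell\cong\mathbb{C}$ of one-dimensional vector spaces, hence $c_L$ is already an isomorphism of vector bundles on $G_0$. It therefore remains only to show that $c_L$ is compatible with the groupoid structure isomorphisms.

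The structure isomorphism on $L^*\otimes L$ is $(\psi_L^*)^{-1}\otimes\psi_L\colon s^*(L^*\otimes L)\to t^*(L^*\otimes L)$, and on $\underline{\mathbb{C}}$ it is the identity. So the task is to check commutativity, on $G_1$, of the square
\begin{equation*}
\xymatrix{
s^*L^*\otimes s^*L \ar[r]^-{s^*c_L} \ar[d]_{(\psi_L^*)^{-1}\otimes\psi_L} & \underline{\mathbb{C}} \ar[d]^{\mathrm{id}}\\
t^*L^*\otimes t^*L \ar[r]^-{t^*c_L} & \underline{\mathbb{C}}
}
\end{equation*}
This is an instance of the general fact that the evaluation pairing is invariant under isomorphisms of vector bundles: for any isomorphism $\phi\colon V\to W$, one has $\mathrm{ev}_W\circ((\phi^*)^{-1}\otimes\phi)=\mathrm{ev}_V$, which is the same identity already invoked implicitly in diagram \eqref{diagmu}. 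Applying this fibrewise to the isomorphism $\psi_L\colon s^*L\to t^*L$, and using that $s^*c_L$ and $t^*c_L$ are the evaluation pairings of $s^*L$ and $t^*L$ respectively, yields the commutativity of the square.

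Finally, one should check the associativity condition on $G_1\times_{G_0}G_1$, but the associativity of $\psi_L$ there forces the corresponding compatibility of $(\psi_L^*)^{-1}\otimes\psi_L$, so the map $c_L$ is automatically a morphism in $\mathrm{Vect}(G)$; combined with its fibrewise invertibility this makes it the desired canonical isomorphism. No step is a real obstacle: the only content is the naturality of evaluation under isomorphisms, and the rank-one hypothesis on $L$ to guarantee non-degeneracy on each fibre.
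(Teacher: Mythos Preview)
Your proof is correct and follows the same route as the paper: take the canonical evaluation pairing $c\colon L^*\otimes L\to\underline{\mathbb{C}}$ on $G_0$ and verify the compatibility square on $G_1$ using invariance of the pairing under isomorphisms. The only superfluous step is your final paragraph about an ``associativity condition on $G_1\times_{G_0}G_1$'': the definition of a morphism in $\mathrm{Vect}(G)$ requires only the single compatibility square on $G_1$, so there is nothing further to check.
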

%%%
\begin{proof}
Consider the isomorphism $c:L^{*}\otimes{L}\to\underline{\mathbb{C}}$ given by the canonical pairing. It is immediate to see that the following diagram commutes
\begin{displaymath}
\xymatrixcolsep{5pc}\xymatrix{
s^{*}L^{*}\otimes s^{*}L \ar[r]^{(\psi_{L}^{*})^{-1}\otimes\psi_{L}} \ar[d]_{s^{*}c} & t^{*}L^{*}\otimes t^{*}L\ar[d]^{t^{*}c} \\
\underline{\mathbb{C}} \ar[r]_{{\rm id}}& \underline{\mathbb{C}}
}
\end{displaymath}
\end{proof}
%%%
\subsection{Gerbes and gerbe modules on Lie groupoids}
%%%
Gerbes on Lie groupoids can be described in two equivalent ways: they can be seen as objects of a bicategory obtained via the plus contruction applied to the 2-stack of gerbes over manifolds \cite{niksch}, or by \emph{internalizing} to Lie groupoids the definition given in Section \ref{prelim}, where we substitute manifolds with Lie groupoids, line bundles over manifolds with line bundles over Lie groupoids, etc. \cite{husemoller}. In this paper, we will follow the second description, with the technical caveat to replace the notion of surjective submersion in manifolds with the appropriate one in Lie groupoids: as shown in \cite{niksch}, the appropriate notion is that of a \emph{weak equivalence} of groupoids. 
%%%
\begin{definition}
Let $\Gamma$ and $\Lambda$ be Lie groupoids. A morphism $F:\Gamma \to \Lambda$ is called a weak equivalence if
%%%
\begin{enumerate}
\item The diagram
\begin{displaymath}
\xymatrix{\Gamma_{1} \ar[r]^{F_{1}} \ar[d]_{s\times t} & \Lambda_{1}\ar[d]^{s\times t}\\
\Gamma_{0}\times\Gamma_{0} \ar[r]_{F_{0}\times{F_{0}}} & \Lambda_{0}\times\Lambda_{0}
}
\end{displaymath}
is a pullback diagram.
\item The smooth map 
\begin{displaymath}
\Gamma_{0}\times_{\Lambda_{0}}\Lambda_{1} \to \Lambda_{0}
\end{displaymath}
is a surjective submersion.
\end{enumerate}
%%%
\end{definition}
%%%
To generalise the notion of a bundle gerbe over a manifold, we need the notion of a fiber product of groupoids.
%%%
\begin{definition}
Let $\Gamma^{1}$, $\Gamma^{2}$ and $\Lambda$ be Lie groupoids, and let $F^{1}:\Gamma^{1} \to \Lambda$ and $F^{2}:\Gamma^{2} \to \Lambda$ be morphisms of Lie groupoids. The fiber product $\Gamma^{1}\times_{\Lambda}\Gamma^{2}$ is the groupoid such that
%%%
\begin{displaymath}
{\rm Obj}(\Gamma^{1}\times_{\Lambda}\Gamma^{2}):=\left\{(a,b,\alpha) \in \Gamma^{1}_{0}\times \Gamma^{2}_{0}\times\Lambda_{1}: s(\alpha)=F_{0}^{1}(a), t(\alpha)=F_{0}^{2}(b)\right\}
\end{displaymath}
and
%%%
\begin{displaymath}
{\rm Mor}((a,b,\alpha),(a',b',\alpha')):=\left\{(f,g)\in {\rm Mor}(a,a')\times{\rm Mor}(b,b'):\alpha\circ F^2_{1}(g)=F^1_{1}(f)\circ\alpha'\right\}
\end{displaymath}
%%%
\end{definition}
%%%
One can show that if $F^{1}$ or $F^{2}$ is a weak equivalence of groupoids, then $\Gamma^{1}\times_{\Lambda}\Gamma^{2}$ is also a Lie groupoid
(see \cite{moer}, section 2.3).\\
Given a weak equivalence $\Gamma \to \Lambda$,  we will use the notation $\Gamma^{[k]}$ for the $k$-th fibered product of $\Gamma$ over $\Lambda$.\\
A bundle gerbe over a Lie groupoid $\Lambda$ is then a triple $(\Gamma, L, \mu)$, where $\Gamma \xrightarrow{\pi} \Lambda$ is a weak equivalence, $L$ a line bundle over $\Gamma^{[2]}$, and $\mu$ an isomorphism over $\Gamma^{[3]}$ satisfying an associativity condition over $\Gamma^{[4]}$.
Similarly, Definition \ref{gmodule} carries through to Lie groupoids immediately, and we obtain in particular a category of bundle gerbe modules. We have a descent category $Desc(\Gamma \to \Lambda)$ of vector bundles associated to a weak equivalence $\Gamma \to \Lambda$ of groupoids, defined by internalizing the construction in Section \ref{descent}. Moreover, we have an equivalence ${\rm Vect}(\Lambda)\simeq Desc(\Gamma \to \Lambda) $ induced by the pullback functor \cite{niksch}.\\

Let $\mathcal{G}=(\Gamma,L,\mu)$ be a bundle gerbe over $\Lambda$, and let $\mathcal{M}=(E,\rho)$ and $\mathcal{N}=(E',\rho')$ $\mathcal{G}$-modules. Consider the homomorphism
%%%
\begin{equation}
 \varphi_{EE'}:\pi_{2}^{*}{\underline{\rm Hom}}(E,E')\to\pi_{1}^{*}{\underline{\rm Hom}}(E,E')
\end{equation}
%%%
defined as in (\ref{desciso}).\\
Since all the properties of the category of vector bundles over manifolds used there extend directly to the category of vector bundles over Lie groupoids, we have, as shown in Section \ref{vbgroupoids},
\begin{lemma}
$(\underline{{\rm Hom}}(E,E'),\varphi_{EE'}^{-1})$ is an object in $Desc(\Gamma\to{\Lambda})$.
\end{lemma}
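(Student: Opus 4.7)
The plan is to transport the proof of Lemma \ref{lemmaobj} verbatim to the groupoid setting, reinterpreting every step as a statement in the symmetric monoidal category with duals $\mathrm{Vect}(\Gamma^{[k]})$. The preceding subsection has already provided everything needed: internal $\underline{\mathrm{Hom}}$, duals, the algebra structure on $\underline{\mathrm{End}}(E)$, and the canonical trivialisation $c : L^{*} \otimes L \xrightarrow{\sim} \underline{\mathbb{C}}$ for a line bundle $L$ over a Lie groupoid.

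First I would check that $\varphi_{EE'}$ is a morphism in $\mathrm{Vect}(\Gamma^{[2]})$, i.e.\ that each factor of the composition defining it (the analogue of (\ref{desciso})) is compatible with the equivariance data. The maps $\rho$ and $\rho'$ are groupoid-equivariant by the definition of $\mathcal{G}$-module; the pairing $c$ is equivariant by the last lemma of Section \ref{vbgroupoids}; and the tensor product of equivariant morphisms is equivariant. Next I would establish the cocycle identity $\pi_{2}^{*}\varphi = \pi_{1}^{*}\varphi \circ \pi_{3}^{*}\varphi$ on $\Gamma^{[3]}$. The three diagrams used in the proof of Lemma \ref{lemmaobj} still make sense: compatibility of $\rho, \rho'$ with $\mu$ yields the internal version of (\ref{varphi2}); naturality of the pairings against $\rho^{*}\otimes(\rho')^{-1}$ yields (\ref{diagc}); and the associativity of $\mu$ over $\Gamma^{[4]}$, restricted to $\Gamma^{[3]}$, is (\ref{diagmu}). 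Pasting these as before gives the desired identity.

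The only point requiring any care — and the likely main obstacle — is that the pullback functors $\pi_{i}^{*}: \mathrm{Vect}(\Gamma^{[k-1]}) \to \mathrm{Vect}(\Gamma^{[k]})$ along the face morphisms of Lie groupoids must be symmetric monoidal and commute with duals and with the canonical pairing $c$. This is true because each $\pi_{i}$ is a morphism of Lie groupoids, so $\pi_{i}^{*}$ acts pointwise on underlying bundles while pulling back the equivariance isomorphisms $\psi$; the internal Hom, dual, tensor product, and evaluation are all characterised by universal properties that are preserved by any symmetric monoidal functor between such categories. Granting this, the manifold-level diagrams transport literally to the groupoid setting, and the cocycle identity follows. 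Hence $(\underline{\mathrm{Hom}}(E,E'), \varphi_{EE'}^{-1})$ defines an object of $Desc(\Gamma \to \Lambda)$, completing the proof.
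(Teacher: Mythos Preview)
Your proposal is correct and follows exactly the paper's approach: the paper does not give a separate proof environment for this lemma, but simply remarks that ``all the properties of the category of vector bundles over manifolds used there extend directly to the category of vector bundles over Lie groupoids, as shown in Section~\ref{vbgroupoids}'', which is precisely your strategy of transporting the proof of Lemma~\ref{lemmaobj} verbatim using the internal $\underline{\mathrm{Hom}}$, duals, and the canonical pairing $c$ established for $\mathrm{Vect}(G)$. Your explicit identification of the one point needing care---that the face maps $\pi_i^*$ are symmetric monoidal and preserve duals and the pairing---is a useful addition that the paper leaves implicit.
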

In the same spirit we have the following
\begin{lemma}Let $\mathcal{M}=(E,\rho)$, $\mathcal{N}=(E{'},\rho{'})$ and $\mathcal{P}=(E{''},\rho{''})$ be $\mathcal{G}$-modules, and let $f\in{\rm Hom}_{\mathcal{G}}(\mathcal{M},\mathcal{N})$. Then $f$ induces a morphism $(\underline{{\rm Hom}}(E,E{'}),\varphi_{EE{'}}^{-1})\to(\underline{{\rm Hom}}(E,E{''}),\varphi_{EE{''}}^{-1})$ in $Desc(\Gamma\to{\Lambda})$.
\end{lemma}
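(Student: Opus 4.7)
The plan is to mimic the proof of Lemma \ref{lemmamorf} in the internal-Hom setting of $\mathrm{Vect}(\Gamma)$. Reading $f$ as in the proof of that lemma, we may view $f: E' \to E''$ as a vector-bundle morphism over $\Gamma$ intertwining $\rho'$ and $\rho''$ in the sense of the (internalized) diagram (\ref{gmorf}). Applying the covariant internal Hom functor $\underline{\mathrm{Hom}}(E,-): \mathrm{Vect}(\Gamma) \to \mathrm{Vect}(\Gamma)$ from Section \ref{vbgroupoids} to $f$ produces a morphism
\begin{displaymath}
\tilde f := \underline{\mathrm{Hom}}(E,f) : \underline{\mathrm{Hom}}(E,E') \longrightarrow \underline{\mathrm{Hom}}(E,E'')
\end{displaymath}
in $\mathrm{Vect}(\Gamma)$, concretely given by $\mathrm{id}_{E^{*}} \otimes f$, i.e.\ postcomposition with $f$.

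The bulk of the work is then to verify the descent compatibility, namely the commutativity of the square
\begin{displaymath}
\xymatrixcolsep{4pc}\xymatrix{
\pi_{2}^{*}\underline{\mathrm{Hom}}(E,E') \ar[r]^{\varphi_{EE'}} \ar[d]_{\pi_{2}^{*}\tilde f} & \pi_{1}^{*}\underline{\mathrm{Hom}}(E,E') \ar[d]^{\pi_{1}^{*}\tilde f} \\
\pi_{2}^{*}\underline{\mathrm{Hom}}(E,E'') \ar[r]_{\varphi_{EE''}} & \pi_{1}^{*}\underline{\mathrm{Hom}}(E,E'')
}
\end{displaymath}
over $\Gamma^{[2]}$. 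Unwinding $\varphi_{EE'}$ and $\varphi_{EE''}$ through the internal analogue of (\ref{desciso}), both morphisms share the common leg built from $\rho^{*}$ and the canonical pairing $c: L^{*}\otimes L \to \underline{\mathbb{C}}$, which is available in $\mathrm{Vect}(\Gamma)$ by the last lemma of Section \ref{vbgroupoids}. The legs that differ are $(\rho')^{-1}$ and $(\rho'')^{-1}$, and these are linked precisely by the $\mathcal{G}$-module morphism condition for $f$, namely the identity $\rho'' \circ (\mathrm{id}_{L}\otimes \pi_{1}^{*} f) = \pi_{2}^{*} f \circ \rho'$ on $\Gamma^{[2]}$.

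Chasing this intertwining identity, tensored with $\pi_{1}^{*}E^{*}$ and composed with $c$, immediately produces the required commutativity; naturality of the pairing $c$ and the monoidality of the pullback functors $\pi_{i}^{*}$ take care of the remaining coherence. The only obstacle is bookkeeping, and it is no worse than in the manifold case: every step relies solely on $\mathrm{Vect}(\Gamma)$ being a $\mathbb{C}$-linear symmetric monoidal category with duals, a setting already established in Section \ref{vbgroupoids}. Consequently no genuinely new complication arises in the passage from manifolds to Lie groupoids.
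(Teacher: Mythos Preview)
Your proof is correct and follows exactly the approach the paper intends: the paper does not give a separate proof for this lemma but only remarks ``In the same spirit'' after the preceding lemma, meaning that the argument of Lemma~\ref{lemmamorf} carries over verbatim once one works with the internal Hom in $\mathrm{Vect}(\Gamma)$. Your write-up makes this explicit---including the reinterpretation of $f$ as a map $E'\to E''$, the postcomposition description $\tilde f=\mathrm{id}_{E^*}\otimes f$, and the verification of the descent square via the intertwining identity from diagram~(\ref{gmorf})---which is precisely the content the paper leaves to the reader.
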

As in Section \ref{descent}, given a nontrivial $\mathcal{G}$-module $\mathcal{M}$ we can set up a functor
\begin{equation}
\tilde{\mathcal{F}}_{Grp}^{\mathcal{M}}:\mathcal{G}-{\rm mod}\to Desc(\Gamma\to{\Lambda})
\end{equation}
Since the internal Hom functor of vector bundles on Lie groupoids is $\mathbb{C}$-linear and faithful, the functor $\tilde{\mathcal{F}}^{\mathcal{M}}$ inherits these properties, hence we have
\begin{proposition}
The functor $\tilde{\mathcal{F}}^{\mathcal{M}}$ is a faithful $\mathbb{C}$-linear functor.
\end{proposition}
\subsection{Modules of sections and Serre-Swan Theorem}
Recall that an \'etale Lie groupoid is a Lie groupoid for which the
source map (and, as a consequence, all the structure maps -- target, 
identity and composition) is a local diffeomorphism.
Let $G=(G_{0},G_{1})$ be an \'etale Lie groupoid, and let $C_{c}^{\infty}(G)$ be its convolution algebra. In particular, $C_{c}^{\infty}(G)$ admits the structure of a \emph{Hopf algebroid} over the commutative algebra $C^{\infty}_{c}(G_{0})$. Briefly, a Hopf algebroid is given by an algebra $A$ together with a commutative subalgebra $A_{0}$ in which $A$ has local units, and is equipped with a commutative coalgebra stucture $(\Delta, \epsilon)$ over the right $A_{0}$-action, and a linear antipode $S$ satisfying a certain number of axioms. See \cite{mrcun}, Definition 2.1 for details. In particular, in the case of the Hopf algebroid associated to the groupoid $G$, we have 
\begin{enumerate}
\item the algebra $A$ is given by $C_{c}^{\infty}(G)$
\item the commutative algebra $A_{0}$ is given by $C^{\infty}_{c}(G_{0})$
\item the counit $\epsilon: C_{c}^{\infty}(G) \to C^{\infty}_{c}(G_{0})$ is given by
\begin{displaymath}
\epsilon(a)(x):=\sum_{s(g)=x}a(g)
\end{displaymath}
for any $a\in C_{c}^{\infty}(G)$ and $x\in {G_{0}}$. (This expression 
makes sense, since the Lie groupoid is \'etale.)
\item the antipode $S: C_{c}^{\infty}(G) \to C_{c}^{\infty}(G)$ is given by
%%%
\begin{displaymath}
S(a)(g):=a(g^{-1})
\end{displaymath}
%%%
for any $a\in C_{c}^{\infty}(G)$ and $g\in {G}$.
%%%
\item the comultiplication $\Delta:C_{c}^{\infty}(G) \to C_{c}^{\infty}(G)\otimes_{C_{c}^{\infty}(G_{0})}C_{c}^{\infty}(G)$ is given by the composition
\begin{displaymath}
 C_{c}^{\infty}(G) \to C_{c}^{\infty}(G\times_{s}G) \to C_{c}^{\infty}(G)\otimes_{C_{c}^{\infty}(G_{0})}C_{c}^{\infty}(G)
\end{displaymath}
%%%
where the first homomorphism is induced by the diagonal embedding of $G$ in $G\times_{s}G$, and the second is given by the inverse of the isomorphism $\Omega:C_{c}^{\infty}(G)\otimes_{C_{c}^{\infty}(G_{0})}C_{c}^{\infty}(G) \to C_{c}^{\infty}(G\times_{s}G)$ 
given by
%%%
\begin{displaymath}
\Omega(a\otimes{a'})(g,g'):=a(g)a'(g')
\end{displaymath}
%%%
\end{enumerate} 

Let $E$ be a vector bundle over the groupoid $G$, and denote with $\Gamma_{c}(E)$ the vector space of smooth sections of the vector bundle $E\to{G_{0}}$. As shown in \cite{kalisnik}, $\Gamma_{c}(E)$ admits a left action of the Hopf algebroid $C_{c}^{\infty}(G)$ associated to $G$. Indeed, we have a bilinear map
%%%
\begin{equation}
C_{c}^{\infty}(G)\times\Gamma_{c}(E) \to \Gamma_{c}(E)
\end{equation}
%%%
given by
%%%
\begin{equation}
(af)(x):=\sum_{t(g)=x}a(g)(g\cdot f(s(g)))
\end{equation}
%%%
Given a morphism $\varphi:E \to F$ of vector bundles over $G$, the induced homomorphism of $C_{c}^{\infty}(G_{0})$-modules $\Gamma(\varphi):\Gamma_{c}(E) \to \Gamma_{c}(F)$ is also a homomorphism of left $C_{c}^{\infty}(G)$-modules. Hence, we have a functor 
%%%
\begin{equation}
\Gamma_{c}: {\rm Vect}(G) \to {_{G}{\rm Mod}}
\end{equation}
%%%
from the category of vector bundles over the groupoid $G$ to the category of left modules over the Hopf algebroid $C_{c}^{\infty}(G)$ of the groupoid $G$. 
%%%
\begin{definition}
Let $G=(G_{0}, G_{1})$ be an \'etale Lie groupoid. A left module $M$ over $C_{c}^{\infty}(G)$ is said to be \emph{of finite type} if it is isomorphic as a $C_{c}^{\infty}(G_{0})$-module to some submodule of the module  $C_{c}^{\infty}(G_{0})^{k}$ for some natural number $k$.
\end{definition}
%%%
Given a left module $M$ over $C_{c}^{\infty}(G)$ of finite type, consider for any $x\in{G_{0}}$ the $C_{c}^{\infty}(G_{0})$-module $I_{x}M:=I_{x}C_{c}^{\infty}(G_{0})\cdot{M}$, where $I_{x}C_{c}^{\infty}(G_{0}):=\left\{f \in C_{c}^{\infty}(G_{0}): f(x)=0 \right\}$. Denote
%%%
\begin{equation}
M(x):=M/I_{x}M
\end{equation}
%%%
the quotient $C_{c}^{\infty}(G_{0})(x)$-module, where $C_{c}^{\infty}(G_{0})(x):=C_{c}^{\infty}(G_{0})/I_{x}C_{c}^{\infty}(G_{0})$. Since $M$ is of finite type, one can prove that ${\rm dim}_{\mathbb{C}}M(x) < \infty$.
%%%
\begin{definition}
Let $G=(G_{0}, G_{1})$ be an \'etale Lie groupoid. A left module $M$ over $C_{c}^{\infty}(G)$ of finite type is said to be \emph{of constant rank} if the function
%%%
\begin{displaymath}
x \mapsto {\rm dim}_{\mathbb{C}}M(x)
\end{displaymath}
%%%
is constant over $G_{0}$.
%%%
\end{definition}
%%%
%%%
Let $M$ and $N$ be modules of finite type and constant rank over $C_{c}^{\infty}(G)$. Their direct sum $M\oplus{N}$ is again of finite type and of constant rank. Moreover, the tensor product $M\otimes_{C_{c}^{\infty}(G_{0})}N$ can be given the structure of a left $C_{c}^{\infty}(G)$-module\cite{kalisnik}. With this tensor product we have the following
%%%
\begin{lemma}
Let $G=(G_{0}, G_{1})$ be an \'etale groupoid. Modules of finite type and of constant rank over the Hopf algebroid $C_{c}^{\infty}(G)$ and module morphisms form a monoidal tensor category ${\rm Mod}(G)$.
\end{lemma}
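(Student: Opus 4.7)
The plan is to verify two essential points: closure of the class of modules of finite type and constant rank under the tensor product $\otimes_{C_{c}^{\infty}(G_{0})}$ (equipped with the left $C_{c}^{\infty}(G)$-module structure constructed in \cite{kalisnik}), together with the production of a monoidal unit and the verification of the coherence data.

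First I would check closure of the tensor product on objects. For the constant-rank condition, I observe that the fiber at $x \in G_{0}$ is computed by base change, $M(x) \cong M \otimes_{C_{c}^{\infty}(G_{0})} C_{c}^{\infty}(G_{0})(x)$, so that
\[
(M \otimes_{C_{c}^{\infty}(G_{0})} N)(x) \;\cong\; M(x)\otimes_{\C} N(x),
\]
and $\dim_{\C}(M\otimes N)(x) = \dim_{\C} M(x)\cdot \dim_{\C} N(x)$ is a product of constant functions, hence constant. For finite type, I would exploit the constant-rank hypothesis: combined with the local description of such modules from \cite{kalisnik}, it allows one to identify $M \otimes_{C_{c}^{\infty}(G_{0})} N$ with a submodule of $C_{c}^{\infty}(G_{0})^{k\ell}$ whenever $M$ and $N$ are submodules of $C_{c}^{\infty}(G_{0})^{k}$ and $C_{c}^{\infty}(G_{0})^{\ell}$ respectively.

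Next I would take the monoidal unit to be $C_{c}^{\infty}(G_{0})$, equipped with the left $C_{c}^{\infty}(G)$-module structure given by the counit $\epsilon$. It is of finite type (tautologically) and of constant rank one. The associativity constraint is inherited from $\otimes_{C_{c}^{\infty}(G_{0})}$, and it promotes to a $C_{c}^{\infty}(G)$-module isomorphism because the comultiplication $\Delta$ is coassociative over $C_{c}^{\infty}(G_{0})$. The left and right unit constraints come from the counit axiom $(\epsilon \otimes \mathrm{id})\Delta = \mathrm{id} = (\mathrm{id} \otimes \epsilon)\Delta$, which makes the canonical isomorphisms $C_{c}^{\infty}(G_{0})\otimes_{C_{c}^{\infty}(G_{0})} M \cong M$ equivariant for the Hopf algebroid action.

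Finally, bifunctoriality on morphisms is straightforward: a $C_{c}^{\infty}(G)$-linear map $\varphi\colon M \to M'$ is in particular $C_{c}^{\infty}(G_{0})$-linear, so $\varphi \otimes \mathrm{id}_{N}$ is defined over the base, and its $C_{c}^{\infty}(G)$-equivariance falls out of the explicit formula (via $\Delta$) for the action on a tensor product. The pentagon and triangle axioms then reduce to the corresponding axioms for $\otimes_{C_{c}^{\infty}(G_{0})}$. The main obstacle I anticipate is the stability of finite type under the tensor product: over the non-Noetherian algebra $C_{c}^{\infty}(G_{0})$, a submodule of a free module need not tensor into a submodule of a free module, so this step really requires the constant-rank hypothesis and the local freeness it provides, rather than being a formal consequence of the definitions.
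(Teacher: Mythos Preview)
The paper does not actually supply a proof of this lemma: it is stated immediately after remarking that the tensor product $M\otimes_{C_{c}^{\infty}(G_{0})}N$ carries a left $C_{c}^{\infty}(G)$-module structure by \cite{kalisnik}, and the only further comment is the sentence following the lemma identifying the unit as $C_{c}^{\infty}(G_{0})$ with its natural left $C_{c}^{\infty}(G)$-action. In other words, the paper treats the result as a direct consequence of Kali\v{s}nik's framework and does not spell out closure under tensor product, the coherence constraints, or the role of the Hopf algebroid axioms.

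Your proposal is therefore not so much a different route as an explicit unpacking of what the paper leaves to the reference. The ingredients you isolate---the fiber computation $(M\otimes N)(x)\cong M(x)\otimes_{\C}N(x)$ for constant rank, the use of the counit to make $C_{c}^{\infty}(G_{0})$ the unit object, and the derivation of associators and unitors from coassociativity and the counit axiom for $\Delta$---are the right ones, and they match the structures invoked in \cite{kalisnik,mrcun}. Your caution about stability of finite type under $\otimes_{C_{c}^{\infty}(G_{0})}$ is well placed: this is genuinely the nontrivial step, and it does rely on the local freeness coming from constant rank rather than on any abstract nonsense over $C_{c}^{\infty}(G_{0})$; the paper simply absorbs this into the citation.
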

%%%
Notice that the unit in ${\rm Mod}(G)$ is given by the algebra $C_{c}^{\infty}(G_{0})$ equipped with its natural left $C_{c}^{\infty}(G)$-action.
%%%
Moreover, we have the following 
\begin{lemma}\label{sections}
Let $G=(G_{0}, G_{1})$ be an \'etale groupoid, and let $E$ be a vector bundle over $G$. Then the left $C_{c}^{\infty}(G)$-module $\Gamma_{c}(E)$ is a module of finite type and of constant rank.
\end{lemma}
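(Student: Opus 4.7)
My plan is to reduce both claims (finite type and constant rank) to classical facts about smooth vector bundles on the paracompact finite-dimensional manifold $G_0$, and then to use the groupoid datum $\psi_E$ to upgrade local constancy of the rank to global constancy.

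First I would identify the quotient $M(x) = \Gamma_c(E)/I_x\Gamma_c(E)$ with the fiber $E_x$ via the evaluation map $\mathrm{ev}_x:\Gamma_c(E)\to E_x$. Surjectivity is obtained by choosing a local trivialization of $E$ near $x$ and multiplying any given $v\in E_x$ (extended to a local section) by a compactly supported bump function equal to $1$ near $x$. The inclusion $I_x\Gamma_c(E)\subseteq\ker(\mathrm{ev}_x)$ is immediate, and the reverse inclusion is a standard partition-of-unity/Hadamard-lemma argument performed in a trivializing chart. Thus $\dim_{\mathbb{C}}M(x)=\mathrm{rk}(E_x)$.

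Next, I would argue that this fiberwise rank is constant on $G_0$. The underlying rank function $x\mapsto\mathrm{rk}(E_x)$ is locally constant by the vector bundle structure, and the structure isomorphism $\psi_E:s^{*}E\xrightarrow{\sim}t^{*}E$ gives isomorphisms $E_{s(g)}\cong E_{t(g)}$ for every $g\in G_1$, so $\mathrm{rk}(E_x)$ is constant along $G$-orbits. Under the standing convention that the finite-rank objects in $\mathrm{Vect}(G)$ have globally constant rank (automatic when $G_0$ is orbit-connected, which is the typical case of interest), the function $x\mapsto\dim_{\mathbb{C}}M(x)$ is constant on $G_0$, establishing constant rank.

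For finite type, I would invoke the classical fact that on a finite-dimensional paracompact smooth manifold $G_0$, every finite-rank vector bundle $E$ is a direct summand of a trivial bundle $\underline{\mathbb{C}}^{k}$ for some $k$; equivalently, there is an injective bundle map $E\hookrightarrow\underline{\mathbb{C}}^{k}$. Applying the functor $\Gamma_c$ produces an injective $C_c^{\infty}(G_0)$-linear map $\Gamma_c(E)\hookrightarrow C_c^{\infty}(G_0)^{k}$, which is exactly the condition required by the definition of a module of finite type. The main obstacle is the step from local to global constancy of the rank: from the bare definition one only has local constancy plus orbit-invariance, and a mild transitivity/connectedness hypothesis on $G$ (or a conventional choice in the definition of $\mathrm{Vect}(G)$) is needed to conclude global constancy; everything else in the argument is a routine translation of standard facts about smooth vector bundles into the language of Hopf-algebroid modules.
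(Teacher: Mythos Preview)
The paper does not actually supply a proof of this lemma: it is stated as a fact imported from the reference \cite{kalisnik}, where the Serre--Swan theorem for \'etale Lie groupoids is developed. So there is no ``paper's own proof'' to compare against; your argument stands on its own.

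Your argument is essentially the standard one and is correct in outline. The identification $\Gamma_c(E)/I_x\Gamma_c(E)\cong E_x$ is the classical fiber computation, and the finite-type step via an embedding $E\hookrightarrow\underline{\mathbb{C}}^k$ is fine: since $G_0$ is a finite-dimensional smooth manifold it has finite covering dimension, hence admits a finite trivializing cover, and the usual partition-of-unity construction yields such an embedding even without compactness; applying $\Gamma_c$ then exhibits $\Gamma_c(E)$ as a $C_c^\infty(G_0)$-submodule of $C_c^\infty(G_0)^k$.

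The one point you yourself flag is genuine: from the data alone one only obtains that $x\mapsto\dim_{\mathbb{C}}M(x)$ is locally constant and $G$-orbit invariant, not globally constant. The lemma as stated in the paper tacitly assumes the convention that objects of $\mathrm{Vect}(G)$ have globally constant rank (or that the orbit space of $G$ is connected); this is also the convention in \cite{kalisnik}. You handle this honestly by making the assumption explicit, which is the right thing to do, but you should be aware that without such a hypothesis the ``constant rank'' conclusion can fail (take $G$ a disjoint union of two trivial groupoids and $E$ of different ranks on the two pieces).
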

%%%%
Lemma \ref{sections} tells us that the section functor $\Gamma_{c}$ induces a tensor functor
\begin{equation}
\Gamma_{c}: {\rm Vect}(G) \to {\rm Mod}(G)
\end{equation}
%%%%
from the category of vector bundles over the groupoid $G$ to the category of modules of constant type and of finite rank over the Hopf algebroid $C_{c}^{\infty}(G)$ of $G$.\\
%%%%
Finally, we have a Serre-Swan type theorem for vector bundles over \'etale Lie groupoids\cite{kalisnik}.
%%%
\begin{theorem} \label{4.3.5}The functor $\Gamma_{c}: {\rm Vect}(G) \to {\rm Mod}(G)$ is an equivalence of tensor categories for any \'etale Lie groupoid.
\end{theorem}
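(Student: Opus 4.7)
The plan is to reduce the statement to the classical (manifold) Serre--Swan theorem applied to $G_{0}$, and then use the \'etale hypothesis to show that the extra piece of data on each side --- the isomorphism $\psi_{E}:s^{*}E\simeq t^{*}E$ on the bundle side, and the $C_{c}^{\infty}(G)$-action extending the $C_{c}^{\infty}(G_{0})$-action on the module side --- correspond under the equivalence. That $\Gamma_{c}$ is a tensor functor is essentially formal: $\Gamma_{c}(E\otimes F)\simeq \Gamma_{c}(E)\otimes_{C_{c}^{\infty}(G_{0})}\Gamma_{c}(F)$ already as $C_{c}^{\infty}(G_{0})$-modules, and the comultiplication $\Delta$ on $C_{c}^{\infty}(G)$ is set up so that the diagonal action on the tensor product corresponds to $\psi_{E}\otimes\psi_{F}$; the unit $\underline{\mathbb{C}}\in\mathrm{Vect}(G)$ has sections $C_{c}^{\infty}(G_{0})$, the monoidal unit in $\mathrm{Mod}(G)$.

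For fully faithfulness, given $(E,\psi_{E}),(F,\psi_{F})\in\mathrm{Vect}(G)$, the classical Serre--Swan theorem applied to $G_{0}$ provides a bijection between bundle maps $E\to F$ and $C_{c}^{\infty}(G_{0})$-module maps $\Gamma_{c}(E)\to\Gamma_{c}(F)$. It remains to check that such a map is compatible with $\psi_{E},\psi_{F}$ if and only if the corresponding $C_{c}^{\infty}(G_{0})$-linear map is in fact $C_{c}^{\infty}(G)$-linear. This is a local computation: because $G$ is \'etale, the action formula $(af)(x)=\sum_{t(g)=x}a(g)\bigl(g\cdot f(s(g))\bigr)$ is transparent in $a$, and $C_{c}^{\infty}(G)$-equivariance amounts to the commutativity of a diagram over $G_{1}$ that is exactly the compatibility with $\psi_{E},\psi_{F}$.

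For essential surjectivity, let $M\in\mathrm{Mod}(G)$. Restriction along $C_{c}^{\infty}(G_{0})\hookrightarrow C_{c}^{\infty}(G)$ makes $M$ a finitely generated projective $C_{c}^{\infty}(G_{0})$-module of constant rank, so by the classical Serre--Swan theorem there is a vector bundle $E\to G_{0}$ with $\Gamma_{c}(E)\simeq M$ as $C_{c}^{\infty}(G_{0})$-modules. The task is to recover $\psi_{E}:s^{*}E\to t^{*}E$ from the $C_{c}^{\infty}(G)$-action. Here the \'etale hypothesis is used decisively: locally on $G_{1}$, the source map is a diffeomorphism, so a bump function supported in such a slice acts on $M$ by a well-defined linear map between fibres $E_{x}\to E_{y}$; smoothness of the action in the $C_{c}^{\infty}(G)$-variable, gluing over overlapping slices, and the Hopf-algebroid axioms (antipode, counit, associativity of $\Delta$) then yield a globally defined smooth bundle isomorphism $\psi_{E}$ satisfying the cocycle condition over $G_{1}\times_{G_{0}}G_{1}$.

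The main obstacle is precisely this last step of essential surjectivity: extracting a smooth, globally defined $\psi_{E}$ from the algebroid action and verifying the cocycle identity. Without the \'etale condition the pointwise action of individual arrows is not even defined, and even with it one must patch the local linear maps produced by slicewise bump functions into a genuine bundle map that is smooth across all of $G_{1}$. Once this is accomplished, checking that the Hopf-algebroid axioms translate term-by-term into the associativity and identity constraints for $\psi_{E}$ is routine, and that $\Gamma_{c}$ applied to the reconstructed $(E,\psi_{E})$ recovers $M$ (not just as a $C_{c}^{\infty}(G_{0})$-module but as a $C_{c}^{\infty}(G)$-module) follows from the construction; this completes the equivalence, as already established in \cite{kalisnik}.
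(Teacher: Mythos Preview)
The paper does not actually prove this theorem. It is quoted as a background result from \cite{kalisnik} (Kali\v{s}nik), immediately preceded by the sentence ``Finally, we have a Serre-Swan type theorem for vector bundles over \'etale Lie groupoids \cite{kalisnik}.'' There is therefore no paper proof to compare your proposal against; the theorem functions here as an input, later combined with the descent arguments of Section~3 to obtain the final Theorem in Section~4.4.

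That said, your sketch is a reasonable outline of the argument in \cite{kalisnik}, and you yourself cite that reference at the end. A couple of points are worth flagging. First, you invoke ``the classical Serre--Swan theorem applied to $G_{0}$'', but the statement in the paper is for \emph{any} \'etale Lie groupoid, with no compactness hypothesis on $G_{0}$; this is why compactly supported sections $\Gamma_{c}$ and the algebra $C_{c}^{\infty}(G_{0})$ appear throughout, and the manifold-level input must be the appropriate non-compact version (finitely generated projective $C_{c}^{\infty}(G_{0})$-modules versus vector bundles on $G_{0}$). Second, when you assert that restriction makes $M$ a finitely generated projective $C_{c}^{\infty}(G_{0})$-module, note that the paper's definition of $\mathrm{Mod}(G)$ is in terms of ``finite type'' (embeddable in a finite free module) and ``constant rank'', not projectivity; that these notions coincide for $C_{c}^{\infty}(G_{0})$-modules is part of what one has to check, not something you can take as given. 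Your identification of the genuine technical crux --- extracting a smooth global $\psi_{E}$ on $G_{1}$ from the convolution action via local bisections, and verifying the cocycle condition from the Hopf-algebroid axioms --- is accurate and is indeed where the \'etale hypothesis does its work.
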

%%%%
\subsection{Equivalence of categories}
%%%%
Let $G=(G_{0},G_{1})$ be an \'etale groupoid, and we will assume that the manifold of objects $G_{0}$ is compact. Let $\mathcal{G}=(\Gamma, L, \mu)$ be a bundle gerbe over $G$, and $\mathcal{M}=(E,\rho)$ a nontrivial $\mathcal{G}$-module. Similarly to Proposition \ref{azumend}, we can assign to $\mathcal{M}$ an infinite dimensional algebra defined as
%%%%
\begin{equation}
\mathcal{A}^{\mathcal{M}}_{Grp}:=\Gamma(D(\tilde{\mathcal{F}}_{Grp}^{\mathcal{M}}(\mathcal{M})))
\end{equation}
%%%%
$\mathcal{A}^{\mathcal{M}}_{Grp}$ is an algebra object with unit in ${\rm Mod}(G)$, with the unit morphism $C^{\infty}(G_{0})\to\mathcal{A}^{\mathcal{M}}_{Grp}$ given by the embedding via the identity section.\\
%%%%
Moreover, for any other $\mathcal{G}$-module $\mathcal{N}$ we obtain a right $\mathcal{A}^{\mathcal{M}}_{Grp}$-module given by
%%%%
\begin{equation}
\mathcal{A}^{\mathcal{M}\mathcal{N}}_{Grp}:=\Gamma(D(\tilde{\mathcal{F}}_{Grp}^{\mathcal{M}}(\mathcal{N})))
\end{equation}
%%%%
Consider the category ${\rm Mod}-\mathcal{A}^{\mathcal{M}}_{Grp}$ of right $\mathcal{A}^{\mathcal{M}}_{Grp}$-modules in ${\rm Mod}(G)$, the category of modules of constant type and of finite rank over the Hopf algebroid $C_{c}^{\infty}(G)$ of $G$.

Using the same arguments as in Section \ref{equivcat}, we have a functor
%%%
\begin{equation}
\mathcal{F}_{Grp}^{\mathcal{M}}: \mathcal{G}-{\rm mod} \to {\rm Mod}-\mathcal{A}^{\mathcal{M}}_{Grp}
\end{equation}
%%%
By using the results in Section \ref{gerbegroupoids}, the proof of Theorem \ref{main} extends immediately to the proof of
%%%
\begin{theorem} Let $G=(G_{0},G_{1})$ be an \'etale groupoid, with $G_{0}$ a compact manifold. Let $\mathcal{G}=(\Gamma,L,\mu)$ be a bundle gerbe over $G$ admitting a nontrivial $\mathcal{G}$-module $\mathcal{M}$. Then the functor $\mathcal{F}_{Grp}^{\mathcal{M}}$ is $\mathbb{C}$-linear, fully faithful and essentially surjective, hence an equivalence of categories.  
\end{theorem}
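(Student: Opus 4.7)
The plan is to mimic the proof of Theorem \ref{main} step by step, replacing at each stage the manifold-level tools with their Lie-groupoid analogues, which have already been set up in Sections \ref{vbgroupoids} and \ref{gerbegroupoids}. Concretely, $\mathbb{C}$-linearity and faithfulness of $\mathcal{F}_{Grp}^{\mathcal{M}}$ follow automatically, since $\mathcal{F}_{Grp}^{\mathcal{M}}$ is defined as a composition $\Gamma_{c}\circ D\circ\tilde{\mathcal{F}}_{Grp}^{\mathcal{M}}$, and each factor is $\mathbb{C}$-linear and faithful: $\tilde{\mathcal{F}}_{Grp}^{\mathcal{M}}$ by the previous proposition, $D$ as an inverse of a pullback equivalence of stacks of vector bundles on groupoids, and $\Gamma_{c}$ by Theorem \ref{4.3.5}.

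For fullness, I would proceed exactly as in Theorem \ref{main}. Given an $\mathcal{A}^{\mathcal{M}}_{Grp}$-module morphism $f:\mathcal{A}^{\mathcal{M}\mathcal{N}}_{Grp}\to\mathcal{A}^{\mathcal{M}\mathcal{P}}_{Grp}$, I would apply Theorem \ref{4.3.5} and the descent equivalence to lift it to an $\underline{\rm End}(E)$-module morphism $\underline{\rm Hom}(E,E')\to\underline{\rm Hom}(E,E'')$ in $Desc(\Gamma\to\Lambda)$, then use the canonical vector-space isomorphism ${\rm Hom}_{\underline{\rm End}(E)}(\underline{\rm Hom}(E,M),\underline{\rm Hom}(E,N))\simeq{\rm Hom}_{{\rm Vect}(G)}(M,N)$ established in Section \ref{vbgroupoids} to produce a morphism $\tilde f:E'\to E''$ in ${\rm Vect}(\Gamma)$. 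It then remains to check that compatibility of $f$ with the descent isomorphisms $\varphi_{EE'}$ and $\varphi_{EE''}$ on $\Gamma^{[2]}$ translates into the $\rho$-compatibility required for a $\mathcal{G}$-module morphism, which is a diagram chase identical to the one in the manifold case.

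For essential surjectivity, let $\mathcal{B}\in{\rm Mod}\text{-}\mathcal{A}^{\mathcal{M}}_{Grp}$. Since $C_{c}^{\infty}(G_{0})$ sits centrally in $\mathcal{A}^{\mathcal{M}}_{Grp}$ and $G_{0}$ is compact, $\mathcal{B}$ is a module of finite type and constant rank over $C_{c}^{\infty}(G)$, so by Theorem \ref{4.3.5} we obtain $B\in{\rm Vect}(G)$ with $\Gamma_{c}(B)\simeq\mathcal{B}$ and such that $B$ is a right module over $D(\tilde{\mathcal{F}}_{Grp}^{\mathcal{M}}(\mathcal{M}))$. Pulling back to $\Gamma$, I would use the internal-hom analogue of the classical endomorphism-algebra identity,
\begin{equation}
\pi^{*}B\simeq\underline{\rm Hom}(E,\pi^{*}B\otimes_{\underline{\rm End}(E)}E),
\end{equation}
valid in the symmetric monoidal category ${\rm Vect}(\Gamma)$ with duals and invertible dimensions, to recover a candidate $E^{\prime\prime}:=\pi^{*}B\otimes_{\underline{\rm End}(E)}E$ on $\Gamma$. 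The $\mathcal{G}$-module structure $\rho^{\prime\prime}:L\otimes\pi_{1}^{*}E^{\prime\prime}\to\pi_{2}^{*}E^{\prime\prime}$ is then induced by the algebra-bundle morphism $\rho_{*}:\underline{\rm End}(\pi_{1}^{*}E)\to\underline{\rm End}(\pi_{2}^{*}E)$ and the canonical identification $\underline{\rm End}(\pi_{1}^{*}E)\simeq\underline{\rm End}(\pi_{1}^{*}E\otimes L)$ obtained from $L^{*}\otimes L\simeq\underline{\mathbb{C}}$ (the last lemma of Section \ref{vbgroupoids}), exactly as in the manifold proof.

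The main obstacle I anticipate is purely bookkeeping rather than mathematical: one must verify that the entire chain of tensor manipulations behaves well with respect to the groupoid equivariance data $\psi$ on each object and, in particular, that the associativity condition for $\rho^{\prime\prime}$ over $\Gamma^{[3]}$ follows from the associator $\mu$. This is the groupoid analogue of the final clause of Theorem \ref{main}, and reduces to the fact that the pullback, internal-hom, tensor, and dual functors on ${\rm Vect}(\Gamma)$ all preserve the descent/equivariance data strictly; once this is in place, the compatibility with $\mu$ is the same commutative prism as in the manifold case, now internal to ${\rm Vect}(\Gamma)$. Together these steps yield that $\mathcal{F}_{Grp}^{\mathcal{M}}$ is essentially surjective, completing the equivalence.
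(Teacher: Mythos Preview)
Your proposal is correct and follows precisely the approach the paper takes: the paper's own proof consists of the single sentence ``By using the results in Section \ref{gerbegroupoids}, the proof of Theorem \ref{main} extends immediately,'' and your write-up is exactly an elaboration of what that extension entails, step by step. If anything, you have supplied more detail than the authors do, but the strategy---reduce to the manifold argument via the internal-hom machinery of Section \ref{vbgroupoids}, the descent equivalence for weak equivalences, and the groupoid Serre--Swan Theorem \ref{4.3.5}---is identical.
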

\quad\\
\noindent{\sc Aknowledgments}: We thank Domenico Fiorenza, Thomas Nikolaus and Urs Schreiber for helpful discussions and correspondence. CS and AV are partially supported by the Collaborative Research Centre 676 
``Particles, Strings and the Early Universe - the Structure of 
Matter and Space-Time'' and by the DFG Priority Programme 1388 
``Representation Theory''.
%%%

\end{document}